\newtheorem{thm}{Theorem}[section]
\newtheorem{rmk}[thm]{Remark}
\newtheorem{lemma}[thm]{Lemma}
\newcommand{\R}{\mathbb{R}}
\newcommand{\C}{\mathbb{C}}
\newcommand{\N}{\mathbb{N}}
\newcommand{\mx}{\bm x}
\newcommand{\malpha}{\bm \alpha}
\newcommand{\mj}{\bm j}
\newcommand{\mi}{\bm i}
\newcommand{\my}{\bm y}
\newcommand{\mN}{\bm N}
\newcommand{\0}{\bm 0}
\newcommand{\1}{\bm 1}
\newcommand{\bxi}{\boldsymbol{\xi}}
\newcommand{\mf}{\bm f}
\newcommand{\ihat}{\hat \imath}
\newcommand{\supp}{\text{\rm supp}}
\newcommand{\sinc}{\text{sinc}}
\newcommand{\bbf}{\bm f}
\newcommand{\bbg}{\bm g}
\newcommand{\bbI}{\bm I}
\newcommand{\tA}{\boldsymbol{\mathcal{A}}}
\newcommand{\tB}{\boldsymbol{\mathcal{B}}}
\newcommand{\tC}{\boldsymbol{\mathcal{C}}}
\newcommand{\tV}{\boldsymbol{\mathcal{V}}}
\newcommand{\tW}{\boldsymbol{\mathcal{W}}}
\newcommand{\tF}{\boldsymbol{\mathcal{F}}}
\newcommand{\tG}{\boldsymbol{\mathcal{G}}}
\newcommand{\tI}{\boldsymbol{\mathcal{I}}}
\newcommand{\matA}{\bm A}
\newcommand{\matB}{\bm B}
\newcommand{\matU}{\bm U}
\newcommand{\matSi}{\bm \Sigma}
\newcommand{\matV}{\bm V}
\newcommand{\matE}{\bm E}
\newcommand{\matQ}{\bm Q}
\newcommand{\matY}{\bm Y}
\newcommand{\matR}{\bm R}
\newcommand{\matOm}{\bm \Omega}
\newcommand{\vv}{\bm v}
\newcommand{\tref}[1]{Table~\ref{#1}}
\newcommand{\aref}[1]{Algorithm~\ref{#1}}
\newcommand{\sref}[1]{Section~\ref{#1}}
\numberwithin{equation}{section}
\numberwithin{figure}{section}
\numberwithin{table}{section}
\title{Denoising Convolution Algorithms and Applications \\to SAR Signal Processing}
\author{Alina Chertock\thanks{Department of Mathematics, North Carolina State University, Raleigh, NC, USA; {\tt chertock@math.ncsu.edu}},~
Chris Leonard\thanks{Department of Mathematics, North Carolina State University, Raleigh, NC, USA; {\tt cleonar@ncsu.edu}},~
Semyon Tsynkov\thanks{Corresponding author. Department of Mathematics, North Carolina State University, Raleigh, NC, USA; {\tt  tsynkov@math.ncsu.edu}},~
Sergey Utyuzhnikov\thanks{Department of Mechanical, Aerospace \& Civil Engineering, University of Manchester, Manchester, UK; {\tt S.Utyuzhnikov@manchester.ac.uk}}}
\date{\today}
\begin{document}

\maketitle

\begin{abstract}
Convolutions are one of the most important operations in signal processing. They often involve large arrays and require significant
computing time. Moreover, in practice, the signal data to be processed by convolution may be corrupted by noise. In this paper, we
introduce a new method for computing the convolutions in the quantized tensor train (QTT) format and removing noise from data using the QTT
decomposition. We demonstrate the performance of our method using a common mathematical model for synthetic aperture radar (SAR) processing
that involves a sinc kernel and present the entire cost of decomposing the original data array, computing the convolutions, and then
reformatting the data back into full arrays.
\end{abstract}

\section{Introduction}

Convolution operations are used in different practical applications. They often involve large arrays of data and require optimization with
respect to memory and computational cost. While input data are usually available only in a discrete form, the standard realization
based on a vector-matrix representation is not often efficient since it leads to using sparse matrices. On the
other hand, a tensor decomposition looks very attractive because it might reduce the volume of data very drastically, minimizing the number of zero elements. In
addition, arithmetic operations between tensors can be implemented efficiently.

There are different forms of tensor decomposition. The most popular approach is based on the canonical decomposition \cite{H70} where a
multidimensional array is represented (might be approximate) via a sum of outer products of vectors.  For matrices, such decomposition
is reduced to skeleton decomposition. However, it is known to be unstable in the cases of multiple tensor dimensions, also referred to as
tensor modes. The Tucker decomposition \cite{T66} represents a natural stable generalization of the canonical decomposition and can provide
a high compression rate. The main drawback of the Tucker decomposition is related to the so-called curse of dimensionality; that is, the
algorithm's complexity grows exponentially with the number of tensor modes.
A way to overcome these difficulties is to use the Tensor Train (TT) decomposition, which was originally introduced in \cite{OT09,OT10}.
Effectively, the TT decomposition represents a generalization of the classical SVD decomposition to the case of multiple modes. It can also
be interpreted as a hierarchical Tucker decomposition \cite{H09}.

Computing the TT decomposition fully can be very expensive if we use the standard TT-SVD algorithm given, e.g., by \aref{alg:tt_svd} below.
Therefore, many modifications to this algorithm were proposed in the literature to help speed it up. One such improvement was presented in
\cite{LYB22}, where results comparable to those obtained by the TT-SVD algorithm were produced in a fraction of
the time for sparse tensor data. Another algorithm that uses the column space of the unfolding tensors was designed to compute the TT cores in parallel; see
\cite{SRT21}.
The most popular approach to efficiently compute the TT decomposition is based on using a randomized algorithm; see, e.g.,
\cite{HSW18,CW19,FZ19}.

Maximal compression with the TT decomposition can be reached with matrices whose dimensions are powers of two, as proposed in
the so-called Quantized TT (QTT) algorithm \cite{O10}. As shown in \cite{KKT13}, the convolution realized for multilevel Toeplitz matrices via
QTT has a logarithmic complexity with respect to the number of elements in each mode, $N$, and is proportional to the number of modes. It is
proven that the result cannot be asymptotically improved. However, this algorithm is improved for finite and practically important $N\sim 10^4$
in \cite{RO15} thanks to the cross-convolution in the Fourier (image) space. The improvement is demonstrated for convolutions with three modes
with Newton's potential. It is to be noted that QTT can also be applied to the Fast Fourier Transform (FTT) to decrease its complexity, as shown in \cite{DKS12}. This super-fast FFT (QTT-FFT) beats the standard FFT for extremely large $N$ such as $N\sim 2^{60}$ for one mode
tensors and $N\sim 2^{20}$ for tensors with three modes.

For practical applications, a critical issue is denoising. Real-life data, such as radar signals, are typically contaminated with noise.
Denoising is not addressed in the papers we have cited previously. However, TT decomposition itself potentially has the property of denoising,
owing to the SVD  incorporated in the algorithm \cite{EK19,GCCA20}. In the current work, we propose and implement the low-rank modifications
for the previously developed TT-SVD algorithm of \cite{O11}. These modifications speed up the computations. We also demonstrate the denoising
capacity of numerical convolutions computed using the QTT decomposition. Specifically, we employ a common model for synthetic aperture radar
(SAR) signal processing based on the convolution with a sinc imaging kernel (called the generalized ambiguity function)
\cite[Chapter 2]{sarbook} and show that when a convolution with this kernel is evaluated in the QTT format, the noise level in the resulting
image is substantially reduced compared to that in the original data.

It should be observed that most papers on tensor convolution only consider the run time cost of the convolution after the tensor decomposition
has been applied to the objective function and the kernel function and either ignores the cost of the actual tensor decompositions or puts it
as a side note. In this paper, we consider every step of computing the convolution using the QTT-FFT algorithm, including the decomposition of
the arrays into the QTT format using the TT-SVD algorithm (see \aref{alg:tt_svd}), computation of the QTT-FFT algorithm once in that format,
and then extracting the data back after the computation is conducted (\sref{sec:computing}). As the QTT decomposition is computationally
expensive, we consider several approaches to speed up the decomposition run time. Without these modifications to the TT-SVD decomposition
algorithm, the convolution can take a long to compute and is not a practical approach. We provide more detail in section
\ref{ssec:qtt_e1}.

The methods we use to speed up our TT decompositions are based on truncating SVD ranks in the decomposition algorithm (\aref{alg:tt_svd}) and
lead to a significant noise reduction in the data (Section~\ref{sec:denoising}). Thus, in Chapter 6, we present algorithms to compute
convolutions in a reasonable time while significantly reducing the noise in the data at the same time. Our contribution includes developing and analyzing new approaches to speeding up the tensor train decomposition, see
\sref{sec:computing}. In \sref{sec:denoising}, we consider the effects of convolutions on removing noise in data. Finally, in
\sref{sec:ex_conv}, we show numerical examples and compare our results with other approaches to computing convolutions.

\section{Convolution}\label{sec:convolution}
The convolution operation is widely used in different applications in signal processing, data imaging, physics, and probability, to name a
few. This operation is a way to combine two signals, usually represented as functions, and produce a third signal with meaningful information.
The $D$-dimensional convolution between two functions $f$ and $g$ is defined as
\begin{equation}\label{conv}
I(\mx) =[f*g](\mx) = \int_{\R^D} f(\my)g(\mx-\my)\; d\my,\quad \forall \mx\in\R^D.
\end{equation}

Often to compute the convolution numerically, we assume the support of $f$ and $g$, denoted $\supp(f)$ and $\supp(g)$ respectively, are
compact. For simplicity, in this paper, we assume $\supp(f)=\supp(g)=[-L,L]^D$ for some $L\in\R$. Next, we discretize the domain $[-L,L]^D$
uniformly into $N^D$ points such that
\begin{gather*}\label{space_disc}
\mx_{\mj} = (x_{j_1} ,\ldots,x_{j_D} ),\\
x_{j_d}= -L+\frac{\Delta x}{2}+j_d \Delta x, \quad j_d=0,\ldots,N-1,\quad d=1,\ldots,D,
\end{gather*}
where $\Delta x =\frac{2L}{N}$ and $\mj=(j_1,\ldots,j_D)$. We then let $\bbf$ and $\bbg$ be $D$-dimensional arrays such that
\begin{equation*}
\bbf_{\mj}=f(\mx_{\mj}),\quad \bbg_{\mj}=g(\mx_{\mj})
\end{equation*}
for all $\mj$.
This leads to the discrete convolution $\bbI$ such that
\begin{equation}\label{dconv}
\bbI_{\mj}\coloneqq(\Delta x)^D\sum_{\mi} \bbf_{\mi}
\bbg_{\mj-\mi+(\frac{N}{2}-1)\1}\approx I(\mx_{\mj}),
\end{equation}
where $\1 = (1,\ldots,1)$ and the sums are over all indices $\mi=(i_1,\ldots,i_D)$ that lead to legal subscripts. This Riemann sum
approximation \eqref{dconv} to the integral \eqref{conv} uses the midpoint rule, thus having $\mathcal{O}(\Delta x^2)$ accuracy.

\begin{rmk}
The convolution defined in \eqref{dconv} is equivalent to Matlab's \textbf{convn} function with the optional shape input set to 'same' and then
multiplied by $(\Delta x)^D$.
\end{rmk}

To compute this convolution directly takes $\mathcal{O}(N^{2D})$ operations, but it can be reduced to $\mathcal{O}(N^D\log(N^D))$ by using the
fast Fourier transform (FFT) and the discrete convolution theorem. The FFT algorithm is an efficient algorithm used to compute the
$D$-dimensional discrete Fourier transforms (DFT) of $\tV\in\R^{N\times\ldots\times N}$,
\begin{equation*}
\hat{\tV}_{\malpha}\coloneqq DFT(\tV)=
\sum_{ \mj=\0}^{\mN-\1}\tV_{\mj}\omega_{N}^{\mj\cdot\malpha}
\end{equation*}
where the sum is over the multi-indexed array $\mj$,
\begin{gather*}
\malpha = (\alpha_1,\ldots,\alpha_D), \quad \alpha_d=0,\ldots,N-1, \quad d=1,\ldots,D,\\
\mN = (N,\ldots,N),\quad \0=(0,\ldots,0),
\end{gather*}
and $\omega_{N}=e^{-\frac{2\pi \ihat}{N}}$, where $\ihat=\sqrt{-1}$ is the imaginary unit.
Similarly, the $D$-dimensional inverse discrete Fourier transform (IDFT), such that
\begin{equation*}
\tV=IDFT(DFT(\tV)),
\end{equation*}
of the array $\hat{\tV}\in\R^{N\times\ldots\times N}$ is given by
\begin{equation*}
\tV_{\mj} =\frac{1}{N^D} \sum_{\malpha=\0}^{\mN-\1}\hat{\tV}_{\malpha}\omega_{N}^{-\mj\cdot\malpha}.
\end{equation*}

Using the discrete Fourier transform, we can compute the circular convolution $\bbI^c=(\tV\circledast \tW)$ defined as
\begin{gather*}
\bbI^c_{\mj} = \sum_{\mi=\0}^{\mN-\1}\tV_{\mi}\bar{\tW}_{\mj-\mi}\\
\bar{\tW}_{i_1,\ldots,i_D} = \tW_{j_1,\ldots,j_D},
\quad i_d\equiv j_d \text{ mod($N$)}, \quad d=1,\ldots,D,
\end{gather*}
by taking the DFT of $\tW$ and $\tV$, multiplying the results together, and then taking the IDFT of the given result. Thus, we have
\begin{equation*}
\bbI^c =IDFT(DFT(\tW)\odot DFT(\tV))
\end{equation*}
where $\odot$ is Hadamard product (element-wise product) of $D$-dimensional arrays.
The circular convolution is the same as the convolution of two periodic functions (up to a constant scaling),
thus to obtain the  convolution given
in \eqref{dconv} (also known as a linear convolution), we need to pad the vectors $\bbf$ and $\bbg$ with at least $N-1$ zeros in each
dimension. For example, given the vectors $\bbf^0, \bbg^0 \in\R^{2N-1}$ with
$$
\begin{aligned}
\bbf^0_j=\begin{cases}
\bbf_j & 0\leq j\leq N-1\\
0 & j>N-1
\end{cases}, \quad\text{ and }\quad
\bbg^0_j=\begin{cases}
\bbg_j & 0\leq j\leq N-1\\
0 & j>N-1
\end{cases},
\end{aligned}
$$
and $\bbI^c=(\bbf^0\circledast \bbg^0)$ as the circular convolution between them, the linear convolution $\bbI$ in \eqref{dconv} is given by
\begin{equation*}
    \bbI_j = \Delta x
    \bbI^c_{j+\frac{N-1}{2}},\quad j=0,\ldots,N-1.
\end{equation*}

In this paper, we let $g$ be a predefined kernel, such as the SAR generalized ambiguity function (GAF) (see \sref{sec:SAR} and
\cite[Chapter 2]{sarbook} for detail) and $f$ be a smooth gradually varying function contaminated with white noise. To compute the convolution,
we use the QTT decomposition \cite{K11} and the QTT-FFT algorithm \cite{DKS12}. The QTT decomposition is a particular case of the more general
TT decomposition (see \sref{sec:TTD} and \cite{O11} for detail).

\section{Synthetic aperture radar (SAR)}\label{sec:SAR}
SAR is a coherent remote sensing technology capable of producing two-dimensional images of the Earth's surface from overhead platforms
(airborne or spaceborne). SAR illuminates the chosen area on the surface of the Earth with microwaves (specially modulated pulses) and
generates the image by digitally processing the returns (i.e., reflected signals). SAR processing involves the application of the matched
filter and summation along the synthetic array, which is a collection of successive locations of the SAR antenna along the flight path. Matched
filtering yields the image in the direction normal to the platform flight trajectory or orbit (called cross-track or range), while summation
along the array yields the image in the direction parallel to the trajectory or orbit (along-the-track or azimuth).

Mathematically, each of the two signal processing stages can be interpreted as the convolution of the signal received by the SAR antenna with a
known function. Equivalently, it can be represented as a convolution of the ground reflectivity function, which is the unknown quantity that
SAR aims to reconstruct the imaging kernel or generalized ambiguity function. The advantage of this equivalent representation is that it
leads to a very convenient partition: the GAF depends on the imaging system's characteristics, whereas the target's properties determine the
ground reflectivity function. Moreover, image representation via GAF allows one to see clearly how signal compression (a property that pertains
to SAR interrogating waveforms) enables SAR resolution, i.e., the capacity of the sensor to distinguish between closely located targets.

In the simplest possible imaging scenario, when the propagation of radar signals between the antenna and the target is assumed unobstructed,
and several additional assumptions also hold; the GAF in either range or azimuthal direction is given by the sinc (or spherical Bessel)
function:
\begin{equation}
\label{eq:GAF}
g(x)=A\,\sinc\left(\pi\frac{x}{\Delta_x}\right)\equiv A\frac{\sin\left(\pi\frac{x}{\Delta_x}\right)}{\pi\frac{x}{\Delta_x}},
\end{equation}
where the constant $A$ is determined by normalization, $x$ denotes a given direction, and the quantity $\Delta_x$ is the resolution in this
direction. From the formula (\ref{eq:GAF}), we see that the resolution is defined as half-width of the sinc main lobe, i.e., the distance from
is central maximum to the first zero. When $x$ is the range direction (cross-track), the resolution $\Delta_x$ is inversely proportional to the
SAR signal bandwidth, see \cite[Section 2.4.4]{sarbook}. When $x$ is the azimuthal direction (along-the-track), the resolution is inversely
proportional to the length of the synthetic array, i.e., synthetic aperture, see \cite[Section 2.4.3]{sarbook}. Note that lower values of $
\Delta_x$ correspond to better resolution because SAR can tell between the targets located closer to one another. It can also be shown that as
$\Delta_x\to0$ the GAF given by (\ref{eq:GAF}) converges to the $\delta$-function in the sense of distributions \cite[Section 3.3]{a92e}. In
this case, the image, which is a convolution of the ground reflectivity with the GAF, coincides with ground reflectivity. This would be ideal
because the image would reconstruct the unknown ground reflectivity exactly. This situation, however, is never realized in practice because
having $\Delta_x\to0$ requires either the SAR bandwidth (range direction) or synthetic aperture (azimuthal direction) to become infinitely
large, which is not possible.

The literature on SAR imaging is vast. Among the more mathematical sources, we mention the monographs \cite{cheney-09}, and \cite{sarbook}.

\section{Tensor Train Decomposition}\label{sec:TTD}
Consider the $K$-mode, tensor $\tA\in\C^{M_1\times\ldots\times M_K}$ such that
\begin{equation*}
\tA=a(i_1,\ldots,i_K), \quad i_k=0,\ldots,M_k-1, \quad k=1,\ldots,K,
\end{equation*}
where $M_k$ is the size of each mode, and $a(i_1,\ldots,i_K)\in\C$ are the elements of the tensor $\tA$ for all $i_k=0,\ldots,M_k-1$ and
$k=1,\ldots,K$. The tensor train format of $\tA$ decomposes the tensor into $K$ cores $\tA^{(k)}\in\C^{r_{k-1}\times M_k\times r_k}$ such that
\begin{equation*}
a(i_1,\ldots,i_K)= \matA^{(1)}_{i_1}\matA^{(2)}_{i_2}\cdots \matA^{(K)}_{i_K},
\end{equation*}
where the matrices  $\tA^{(k)}(:,i_k,:)=\matA^{(k)}_{i_k}\in\C^{r_{k-1}\times r_k},$ for all $i_k=0,\ldots,M_k-1$, $k=1,\ldots,K$ (In Matlab
notation, $\matA^{(k)}_{i_k}=\text{squeeze}(\tA^{(k)}(:,i_k,:))$, where squeez() is used to convert the $\C^{r_{k-1}\times 1\times r_k}$ tensor
into a $\C^{r_{k-1}\times r_k}$ matrix). The matrix dimensions $r_k$, $k=1,\ldots,K$, are referred to as the TT-ranks of the tensor
decomposition, and the $3-$mode tensors $\tA^{(k)}$ are the TT-cores. Since we are interested in the case when $a(i_1,\ldots,i_K)\in\C$,
we impose the condition $r_0=r_K=1$. Let $M=\max_{1\leq k\leq K} M_k$ and $r=\max_{1\leq k\leq K-1}r_k$, then the the tensor $\tA$, which has
$\mathcal{O}(M^K)$ elements,  can be represented with $\mathcal{O}(MK r^2)$ elements in the TT format.

We can also represent the TT decomposition as the product of tensor contraction operators. Define the tensor contraction between the tensors
$\tA\in\C^{M_1\times\ldots\times M_K}$ and $\tB\in\C^{M_K\times\ldots\times M_{\tilde{K}}}$ (note that the first dimension size of $\tB$ equals
the last dimension size of $\tA$) as $\tC = \tA\circ\tB\in\C^{M_1\times\ldots\times M_{K-1}\times M_{K+1}\times\ldots\times M_{\tilde{K}}}$
where
\begin{equation*}
\tC(i_1,\ldots,i_{K-1},i_{K+1},\ldots,i_{\tilde{K}}) = \sum_{p=0}^{M_K-1} \tA(i_1,\ldots,p)\tB(p,\ldots,i_{\tilde{K}}).
\end{equation*}
Then the TT format of $\tA$ can be represented as
\begin{equation*}
\tA = \tA^{(1)}\circ\ldots\circ\tA^{(K)}.
\end{equation*}

Before we show how to find the TT-cores, we first need to define a few properties of tensors. First, let the matrix $\matA^{\{k\}}$ be the
$k$-th unfolding of the tensor $\tA$ such that
\begin{align*}
\matA^{\{k\}}(\alpha,\beta) &= a(i_1,\ldots,i_K),\\
\alpha = i_1+i_2M_1+\ldots+i_{k}\Pi_{l=1}^{k-1}M_l, &\quad \beta = i_{k+1}+i_{k+2}M_{k+1}+\ldots+i_{K}\Pi_{l=k+1}^{K-1}M_l.
\end{align*}
Thus, we have that $\matA^{\{k\}}\in\C^{M_1M_2\ldots M_k\times M_{k+1}M_{k+2}\ldots M_K}$ which we write as
\begin{equation*}
\matA^{\{k\}} = a(i_1\ldots i_k,i_{k+1}\ldots i_K).
\end{equation*}
We denote the process of unfolding a tensor $\tA$ into a matrix $\matA^{\{k\}}\in\C^{M_1M_2\ldots M_k\times M_{k+1}M_{k+2}\ldots M_K}$ as
\begin{equation*}
\matA^{\{k\}}=\text{reshape}(\tA,[M_1M_2\ldots M_k, M_{k+1}M_{k+2}\ldots M_K])
\end{equation*}
and folding a matrix into a tensor $\tA\in\C^{M_1\times\ldots \times M_K}$ as
\begin{equation*}
\tA=\text{reshape}(\matA^{\{k\}},[M_1,M_2,\ldots,M_K]).
\end{equation*}
(Note this is to be consistent with the Matlab function reshape()).

From \cite{O11} it can be shown that there exist a TT-decomposition of $\tA$ such that
\begin{equation*}
r_k=\text{rank}(A^{\{k\}}), \quad k=1,\ldots,K.
\end{equation*}
Denote the Frobenius norm of a tensor $\tA\in\C^{M_1\times\ldots\times M_K}$ as
\begin{equation*}
\|\tA\|_F=\sqrt{\sum_{i_1=0}^{M_1-1}\ldots\sum_{i_K=0}^{M_K-1}|a(i_1,\ldots,i_K)|^2},
\end{equation*}
and the $\varepsilon_k$-rank of the matrix $\matA^{\{k\}}$ as
\begin{equation*}
    \text{rank}_{\varepsilon_k}(\matA^{\{k\}})\coloneqq \min \{\text{rank}(\matB) : \|\matA^{\{k\}}-\matB\|_F\leq \varepsilon_k\}.
\end{equation*}
Given a set $\{\varepsilon_k\}_{k=1}^K$, we can approximate the tensor $\tA$ with a tensor $\tilde{\tA}$ in the TT format such that it has TT-
ranks $\tilde{r}_k\leq\text{ rank}_{\varepsilon_k}(\matA^{\{k\}})$ and
\begin{equation*}
	\|\tA-\tilde{\tA}\|_F \leq \varepsilon,\quad \varepsilon^2=\varepsilon_1^2+\ldots+\varepsilon_{K-1}^2.
\end{equation*}

In \aref{alg:tt_svd}, we present the TT-SVD algorithm \cite{O11}, which computes a TT-decomposition of a tensor $\tA$ with a prescribed
accuracy $\varepsilon$. In
\sref{sec:computing}, we present some modifications to this algorithm that relax the prescribed tolerance and allow us to compute an
approximate decomposition faster. For a tensor $\tA\in\C^{M_1\times\ldots\times M_K}$, define
\begin{equation*}
    |\tA| = \text{number of elements in }\tA= M_1 M_2\ldots M_K.
\end{equation*}
\begin{algorithm}[ht!]
    \footnotesize
    \SetKwInOut{Input}{input}
    \SetKwInOut{Output}{output\,}
    \SetAlgoLined
    \Input{$\tA$, $\varepsilon$}
    \Output{TT-Cores: $\tA^{(1)},\tA^{(2)},...,\tA^{(K)}$}
    $\tau \coloneqq \frac{\varepsilon}{\sqrt{M-1}}\|\tA\|_F$\\
     $r_0\coloneqq1$;\\
     \For{k=1,...,K-1}{
      $\matA^{\{k\}} \coloneqq \text{reshape}(\tA,[M_kr_{k-1},\frac{|\tA|}{M_kr_{k-1}}])$\\
      Compute truncated SVD: $\matU\matSi \matV^*+\matE=\matA^{\{k\}}$ such that $\|\matE\|_F \leq \tau$\\ $r_k\coloneqq\text{rank}(\matSi)=\text{rank}_\tau(\matA^{\{k\}})$\\
      $\tA^{(k)}\coloneqq \text{reshape}(\matU,[r_{k-1}, M_k, r_{k}])$\\
      $\tA \coloneqq \matSi \matV^*$\\
     }
     $\tA^{(K)}\coloneqq \tA$
     \caption{TT-SVD}
     \label{alg:tt_svd}
\end{algorithm}

The TT-decomposition can also be applied to tensors with a small number of modes by using the quantized tensor train decomposition (QTT). For
instance, let $\vv\in\C^{2^K}$ be a vector ($1$-mode tensor). To apply the QTT-decomposition of $\vv$, we reshape it into the $K$-mode tensor
$\tV\in\C^{2\times\ldots\times 2}$ such that
\begin{equation*}\label{vec2qtt}
\tV(i_1,i_2,\ldots,i_K)=\vv(i),
\end{equation*}
where
\begin{equation*}
i =\sum_{k=1}^K i_k2^{k-1},\quad i_k=0,1,
\end{equation*}
then compute the TT-decomposition of the tensor $\tV$ (you can think of $i_K\ldots i_1$ as the binary representation of $i$). Extending the
QTT-decomposition to matrices ($2$-mode tensors) $\matV\in\C^{2^K\times 2^K}$ can be done similarly by reshaping them into  $2K$-mode tensors
$\tV\in\C^{2\times\ldots\times 2}$, then computing the TT-decomposition of $\tV$.

We can approximate the discrete Fourier transform of a vector $\vv\in\R^{2^K}$ (or 2D discrete Fourier transform of a matrix
$\matV\in\R^{2^K\times 2^K}$) in the QTT format using what is known as the QTT-FFT approximation algorithm \cite{DKS12}.
Let $\hat{\vv}=DFT(\vv)$ be the discrete Fourier transform of $\vv$ and
let $\tV$ and $\hat{\tV}$ be the tensors in the QTT-format that represent the vectors $\vv$ and $\hat{\vv}$ respectively. Given $\tV$, the QTT-
FFT approximation algorithm can approximate $\hat{\tV}$ with a tensor $\tilde{\tV}$ such that
\begin{equation}
\label{eq:tolerance}
    \|\tilde{\tV}-\hat{\tV}\|_F\leq \varepsilon
\end{equation}
for some given tolerance $\varepsilon$.
Similarly, we could prescribe some maximum TT-rank, $\hat{R}_{\max}$, for the QTT-FFT algorithm such that $\tilde{r}_k\leq \hat{R}_{\max}$ for
all TT-ranks of $\tilde{\tV}$, $\{\tilde{r}_k\}_{k=0}^K$. The QTT-FFT algorithm can easily be modified to the inverse Fourier transform of a
vector (or matrix) in the QTT format, which we denote as the QTT-iFFT algorithm.

\section{Computing the convolution with QTT decomposition}
\label{sec:computing}
In practice, we often need to compute the convolution \eqref{conv}, where $f$ is the function of interest and $g$ is a given kernel, but $f$ is
not given explicitly. Instead, we are given noisy data
\begin{equation}
\label{eq:noisydata}
    (\bbf_{\xi})_{\mj}=f(\mx_{\mj})+\xi_{\mj}
\end{equation}
at discrete points $\mx_{\mj}$, ${\mj}=(j_1,\ldots,j_D)$. In particular, representing the ground reflectivity function for SAR reconstruction
in the form (\ref{eq:noisydata}) helps one model the noise in the received data. We assume that $\xi_{\mj}$ is white noise from a normal
distribution with the standard deviation $\sigma$, i.e., $\xi_{\mj}\sim\mathcal{N}(0,\sigma^2)$.

Since the kernel function $g$ is known, we can discretize it as
\begin{equation*}
    \bbg_{\mj}=g(\mx_{\mj}),
\end{equation*}
for the same $\mx_{\mj}$ values as in \eqref{eq:noisydata}. We assume the $D$-dimensional spatial domain is uniformly discretized into $N^D$
points where $N=2^{K-1}-1$, see \eqref{space_disc}.
To compute the discrete convolution \eqref{dconv}, we propose using the quantized tensor train (QTT) decomposition. To represent the arrays in
the QTT format, we pad them with zeros such that the new arrays are $D$-mode tensors in $\R^{2^{K}\times\ldots\times 2^{K}}$. We can relax the
condition on the size $N$, but to compute the convolution with an FFT algorithm, we need to zero-pad each dimension with at least $N-1$ extra
zeros (see \sref{sec:convolution}). Also, for the QTT decomposition, we need each dimension to be of size $2^K$ for some $K\in\N$. Let
$\tF_\xi,\tG$ be the zero-padded tensors representing $\bbf_\xi$ and $\bbg$ respectively in the QTT format. Here, we assume that the
discretization of $f$, $\bbf$, has a low, but not exactly known, TT-rank in the QTT-format. This is motivated by the fact that many standard
piecewise smooth functions naturally have a low TT-rank, see \cite{O13,G10,K11}.

To find approximations of these tensors in the TT-format, we modify the original TT-SVD algorithm. This is because with the full TT-SVD
algorithm, if the tolerance $\varepsilon$ is small, see equation (\ref{eq:tolerance}), the TT-decomposition has close to full rank. Not only
does it take a very long time to compute these decompositions, but most of the noise is still present. However, if $\varepsilon$ is too large,
the TT-SVD algorithm loses too much information about the true function $f$. For these reasons, we present slight modifications to the TT-SVD
algorithm. They are needed to significantly reduce the computing time, as illustrated by the example in Section~\ref{ssec:qtt_e1}.

We consider three different modifications to the TT-SVD algorithm. These modifications are as follows:
\begin{enumerate}
\item[(1)] Set some max rank $R_{\max}$ and truncate the SVD in \aref{alg:tt_svd} with ranks less than or equal to this threshold. Denote this
method as the \textbf{max rank TT-SVD} algorithm.
\item[(2)] Set some max rank $R_{\max}$ and replace the SVD in \aref{alg:tt_svd} with a randomized SVD (RSVD) given in \cite{HMT11} with max
ranks set to $R_{\max}$ (see Appendix~\ref{sec:randomized}). Denote this method as the \textbf{max rank TT-RSVD} algorithm. Note that for this
algorithm, we also need to prescribe an oversampling parameter $p$.  We could choose from several randomized SVD algorithms, but due to
simplicity and effectiveness, we use the approach described in  Appendix~\ref{sec:randomized}. This algorithm implements the direct SVD.
\item[(3)] Truncate the SVD in \aref{alg:tt_svd} based on when there is a relative drop in singular values, i.e., if
$\frac{\sigma_{k+1}}{\sigma_k}<\delta~~(0 < \delta < 1)$ for a given threshold $\delta$, then truncate the singular values less than $\sigma_k$. Denote this method
as the \textbf{SV drop off TT-SVD} algorithm.
\end{enumerate}
For the \textbf{max rank TT-RSVD}, if the unfolding matrices $\matA^{\{k\}}\in\R^{m_k\times n_k}$, where $\min(m_k,n_k)\leq R_{\max}+p$, then
we revert to the \textbf{max rank TT-SVD} algorithm (without the randomized SVD).

We can modify the QTT-FFT and QTT-iFFT algorithms similarly to our modifications of the TT-SVD algorithms to get a low-rank approximation to
the discrete Fourier transform representations of $\tF_\xi$ and $\tG$. For this, we replace the SVD in the QTT-FFT algorithm (QTT-iFFT) with
the truncated SVD algorithms (1)-(3) given above, but with possibly a different max rank which we denote $\hat{R}_{max}$ for (1) and (2),
or different threshold $\hat{\delta}$ for (3). For the examples in \sref{sec:ex_conv}, we distinguish between
$R_{max}$ and $\hat{R}_{max}$. However, we use the same threshold for $\delta$ in the TT-SVD algorithm and the QTT-FFT algorithm. Thus, we
do not distinguish between the two. Note that using the threshold (1) in the QTT-FFT algorithm is not new and is mentioned in \cite{DKS12}.

With these above modifications to the TT-SVD algorithm and QTT-FFT (QTT-iFFT) algorithms, we propose the following algorithm
(\aref{alg:qtt_conv}) to approximate the convolution between the $D$-dimensional arrays $\bbf$ and $\bbg$. For this algorithm, we denote
\begin{itemize}
\item $QFFT_{\hat{R}_{\max}(\hat{\delta})}$: QTT-FFT algorithm with a max rank of ${\hat{R}_{\max}}$ (or threshold $\hat{\delta}$),
\item  $QiFFT_{\hat{R}_{\max}(\hat{\delta})}$: QTT-iFFT algorithm with a max rank of ${\hat{R}_{\max}}$ (or threshold $\hat{\delta}$).
\end{itemize}

\begin{algorithm}[ht!]
    \footnotesize
    \SetKwInOut{Input}{input}
    \SetKwInOut{Output}{output\,}
    \SetAlgoLined
    \Input{$\bbf_\xi$, $\bbg$}
    \Output{ $\bbI$ }
    \textbf{Step 1:}
     $\tF_\xi$ = $\text{reshape}(\bbf_\xi,[2,\ldots,2])$,
     $\tG$ = $\text{reshape}(\bbg,[2,\ldots,2])$\\
     \textbf{Step 2:} Decompose $\tF_\xi$ and $\tG$ into the QTT format using one of the modified TT-SVD algorithms.\\
     \textbf{Step 3:} $\tI=QiFFT_{\hat{R}_{max}(\hat{\delta})}(QFFT_{\hat{R}_{max}(\hat{\delta})}(\tF_\xi)\odot QFFT_{\hat{R}_{max}(\hat{\delta})}(\tG))$.\\
     \textbf{Step 4:} Retrieve $\bbI$ from $\tI$. (see \aref{alg:full})
     \caption{QTT convolution}
     \label{alg:qtt_conv}
    \end{algorithm}

In Theorem \ref{thm:run_time}, we show the asymptotic run time behavior of computing a convolution in one spatial dimension ($D=1$) with the
\textbf{max rank TT-SVD} algorithm. First, we prove an auxiliary result about the size of the unfolding matrices for this algorithm; see Lemma
\ref{lemma:mn}. For Theorem \ref{thm:run_time}, we consider the whole process of converting the vector into the QTT-format, computing the
convolution, then converting the convolution in the QTT format back into a vector, as is demonstrated in \aref{alg:qtt_conv}. For the last
step, to convert a tensor in the TT-format back into the standard format, we use the 'full' algorithm from the Matlab toolbox
\textbf{oseledets/TT-Toolbox}. This is given in \aref{alg:full}. We then reshape this tensor into a vector with a bit of run time.

\begin{algorithm}[ht!]
    \footnotesize
    \SetKwInOut{Input}{input}
    \SetKwInOut{Output}{output\,}
    \SetAlgoLined
    \Input{$\tA^{(1)},\tA^{(2)},...,\tA^{(K)}$, and size of output tensor [$M_1,\ldots,M_k$]}
    \Output{$\tA\in\C^{M_1\times\ldots\times M_k}$}
    Let $\matA = \tA^{(1)}$\\
     \For{k=2,...,K}{
      $\matA = \text{reshape}(\matA,[\frac{(|\matA|)}{r_{k-1}},r_{k-1}])$\\
      $\matB = \text{reshape}(\tA^{(k)},[r_{k-1},2r_{k}])$\\
      $\matA = \matA\matB$\\
     }
     $\tA = \text{reshape}(\matA,[M_1,\ldots,M_k])$
     \caption{Full}
     \label{alg:full}
\end{algorithm}

\begin{lemma}\label{lemma:mn}
Let $\tA\in\R^{2\times\ldots\times 2}$ be a $K$-mode tensor. Let $\{\matA^{\{k\}}\}_{k=1}^{K-1}$ be the unfolding matrices of $\tA$ in the max
rank TT-SVD algorithm with a max rank of $R_{\max}$ and with each $\matA^{\{k\}}\in\C^{m_k\times n_k}$. Then
\begin{equation*}
m_k = 2r_{k-1}\leq 2R_{\max} \quad \text{and} \quad n_k=2^{K-k}.
\end{equation*}
\begin{proof}
Since $M_k=2$ for all $k$, the proof for $m_k = 2r_{k-1}\leq 2R_{\max}$ is trivial by the first line inside the for loop in Algorithm
\ref{alg:tt_svd}. For $n_k$, we do a proof by induction. First, note that
$|\matA^{\{1\}}|=2^K$ and $r_0=1$, thus
\begin{equation*}
n_1 = \frac{|\matA^{\{1\}}|}{2r_0}=\frac{2^K}{2} = 2^{K-1}.
\end{equation*}
Assume $n_\ell = 2^{K-\ell}$ for all $1\leq \ell\leq k-1$. Then,
$$
n_k= \frac{|\matA^{\{k\}}|}{2r_{k-1}}=\frac{|\matSi_{k-1}\matV_{k-1}^*|}{2r_{k-1}}=\frac{r_{k-1}n_{k-1}}{2r_{k-1}}=\frac{n_{k-1}}{2}
=\frac{2^{K-(k-1)}}{2}=2^{K-k}.
$$
Thus, we get
\begin{equation*}
m_k = 2r_{k-1}\leq 2R_{\max} \quad \text{and} \quad n_k=2^{K-k}.
\end{equation*}
\end{proof}
\end{lemma}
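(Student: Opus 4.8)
The plan is to read off both claims directly from the structure of the max rank TT-SVD algorithm (\aref{alg:tt_svd}) by tracking the shapes of the arrays as the loop advances. The row count $m_k$ should be immediate: the reshape step that produces $\matA^{\{k\}}$ uses the leading block size $M_k r_{k-1}$, and since every mode of a QTT tensor has $M_k = 2$, this is exactly $2r_{k-1}$. The max rank constraint caps each rank at $r_{k-1} \leq R_{\max}$, so $m_k = 2r_{k-1} \leq 2R_{\max}$ follows with no further work.

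For the column count $n_k$ I would argue by induction on $k$, the content being to show that the number of entries of the tensor fed into iteration $k$ halves at each step. For the base case $k=1$ the input is the full tensor, so $|\matA^{\{1\}}| = 2^K$ and $r_0 = 1$ give $n_1 = 2^K/(2r_0) = 2^{K-1}$. For the inductive step I would use the key observation that at the end of iteration $k-1$ the algorithm overwrites $\tA$ with $\matSi_{k-1}\matV_{k-1}^*$; since $\matSi_{k-1} \in \C^{r_{k-1}\times r_{k-1}}$ and $\matV_{k-1}^*$ retains the $n_{k-1}$ columns of $\matA^{\{k-1\}}$, this array has exactly $r_{k-1} n_{k-1}$ entries. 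Feeding it into iteration $k$ then gives $n_k = r_{k-1} n_{k-1}/(2r_{k-1}) = n_{k-1}/2$, and the inductive hypothesis $n_{k-1} = 2^{K-(k-1)}$ closes the induction.

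The one place that needs care, and what I would flag as the main obstacle, is the bookkeeping of exactly which array plays the role of $\tA$ at the top of iteration $k$, and hence what $|\matA^{\{k\}}|$ equals. The count $r_{k-1} n_{k-1}$ hinges on recognizing that the truncated SVD factor $\matV_{k-1}^*$ preserves the column dimension of the previous unfolding rather than the larger $m_{k-1} n_{k-1}$ size one might naively write. Once that cancellation $r_{k-1} n_{k-1}/(2r_{k-1}) = n_{k-1}/2$ is made explicit, the rank $r_{k-1}$ drops out entirely and the recursion is purely geometric, independent of the particular truncated ranks chosen by the algorithm. I would note this rank-independence as the reason the $n_k$ formula stays clean even though the $r_k$ themselves are data-dependent.
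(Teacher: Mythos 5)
Your proposal is correct and follows essentially the same route as the paper: the row count read off from the reshape line with $M_k=2$, and an induction for $n_k$ whose inductive step rests on the fact that iteration $k$ operates on $\matSi_{k-1}\matV_{k-1}^*$, an array with $r_{k-1}n_{k-1}$ entries, so that $n_k = r_{k-1}n_{k-1}/(2r_{k-1}) = n_{k-1}/2$. Your explicit remark that the rank $r_{k-1}$ cancels, making the recursion independent of the data-dependent truncation ranks, is a nice clarification of a point the paper leaves implicit, but the argument is the same.
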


\begin{thm}\label{thm:run_time}
Let $\bbf_\xi,\bbg\in\R^{2^{K-1}-1}$ for some positive integer $K$. Then the computational complexity, $C_{\text{QTT-conv}}$, of approximating
the convolution $\bbf_\xi*\bbg$ with the max rank TT-SVD and max rank QTT-SVD algorithms described above is
\begin{equation*}
C_{\text{QTT-conv}}\leq \mathcal{O}(R^2_{\max} 2^K),
\end{equation*}
where $R_{\max}$ is the prescribed max rank for both the TT-SVD algorithms and the QTT-FFT algorithm.
\end{thm}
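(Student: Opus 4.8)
The plan is to split the total cost $C_{\text{QTT-conv}}$ along the four steps of Algorithm~\ref{alg:qtt_conv} and bound each in turn, then argue that the decomposition (Step~2) and the reconstruction (Step~4) dominate, each contributing $\mathcal{O}(R_{\max}^2 2^K)$, while the reshape in Step~1 is free and the spectral work in Step~3 (QTT-FFT, Hadamard product, QTT-iFFT) is only polynomial in $K$ and $R_{\max}$, hence polylogarithmic in the array length $2^K$ and asymptotically subdominant. Throughout I treat $R_{\max}$ as a fixed parameter and let $K\to\infty$. Note that $N=2^{K-1}-1$ zero-padded to length $2^K$ yields a $K$-mode tensor with all modes of size $2$, which is exactly the setting of Lemma~\ref{lemma:mn}.

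For Step~2 I would feed Lemma~\ref{lemma:mn} into a cost count of the loop in Algorithm~\ref{alg:tt_svd}. At stage $k$ the unfolding $\matA^{\{k\}}\in\C^{m_k\times n_k}$ has $m_k=2r_{k-1}\le 2R_{\max}$ and $n_k=2^{K-k}$. A truncated SVD of an $m\times n$ matrix costs $\mathcal{O}(mn\min(m,n))$, and since $\min(m_k,n_k)\le m_k\le 2R_{\max}$, the SVD at stage $k$ costs $\mathcal{O}(m_k n_k R_{\max})=\mathcal{O}(R_{\max}^2\,2^{K-k})$; the subsequent product $\matSi\matV^*$ is cheaper and is absorbed into this bound. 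Summing the geometric series $\sum_{k=1}^{K-1}R_{\max}^2\,2^{K-k}=R_{\max}^2(2^K-2)$ gives the Step~2 cost $\mathcal{O}(R_{\max}^2 2^K)$.

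For Step~4 I would trace Algorithm~\ref{alg:full} while tracking matrix shapes. Since $r_0=1$, one checks by induction that after iteration $k$ the running matrix has $|\matA|=2^k r_k$, so at iteration $k$ the product multiplies a $2^{k-1}\times r_{k-1}$ matrix by an $r_{k-1}\times 2r_k$ matrix, costing $\mathcal{O}(2^{k-1}r_{k-1}\cdot 2r_k)=\mathcal{O}(2^k r_{k-1}r_k)\le\mathcal{O}(2^k R_{\max}^2)$, where I use that the core ranks of the tensor $\tI$ produced by Step~3 are $\le R_{\max}$. Summing $\sum_{k=2}^{K}R_{\max}^2\,2^{k}=R_{\max}^2(2^{K+1}-4)$ again yields $\mathcal{O}(R_{\max}^2 2^K)$, and the final reshape to a vector is $\mathcal{O}(2^K)$.

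For Step~3 I would invoke \cite{DKS12}: the QTT-FFT and QTT-iFFT of a length-$2^K$ vector with QTT ranks bounded by $R_{\max}$ cost $\mathcal{O}(K^2 R_{\max}^3)$, and forming the Hadamard product of two rank-$\le R_{\max}$ QTT tensors costs $\mathcal{O}(K R_{\max}^4)$; both are polynomial in $K$ and $R_{\max}$, i.e.\ polylogarithmic in $2^K$, hence for fixed $R_{\max}$ are dominated by $\mathcal{O}(R_{\max}^2 2^K)$. Adding the three nontrivial steps closes the proof. The main subtleties I expect are (i) justifying that $\tI$ inherits ranks $\le R_{\max}$ despite the Hadamard product squaring the rank exponent, which follows because the QTT-iFFT is run with max rank $R_{\max}=\hat{R}_{\max}$ and truncates the product back down, and (ii) being explicit that the asymptotic bound holds in the regime of fixed $R_{\max}$ and growing $K$, so that the $\mathrm{poly}(K)$ spectral cost of Step~3 is genuinely lower order than the exponential-in-$K$ cost of Steps~2 and~4.
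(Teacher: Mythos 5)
Your proposal is correct and follows essentially the same route as the paper's own proof: it uses Lemma~\ref{lemma:mn} to bound the per-stage SVD cost as $\mathcal{O}(R_{\max}^2 2^{K-k})$ and sums the geometric series, traces the matrix products in Algorithm~\ref{alg:full} to get the matching $\mathcal{O}(R_{\max}^2 2^K)$ reconstruction cost, and cites \cite{DKS12} for the $\mathcal{O}(K^2 R_{\max}^3)$ spectral step that is dominated for fixed $R_{\max}$. Your extra care about the Hadamard product cost and the rank truncation performed by the QTT-iFFT is a small refinement the paper leaves implicit, not a different argument.
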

\begin{proof}
We show that the computational complexity is dominated asymptotically by the max rank TT-SVD algorithms and the full tensor algorithm. First,
let $C_{svd}$ be the computational cost of the SVD in big $\mathcal{O}$ notation. Then, for a matrix $\matA\in\C^{m\times n}$,
$C_{svd}(\matA)=\mathcal{O}(mn\min(m,n)$). Note that, in Algorithm \ref{alg:tt_svd} (as well as in our max rank modifications), the
computational complexity is dominated by the SVD algorithm. Denote the unfolding matrices at the $k$th iterations as
$\matA^{\{k\}}\in\C^{m_k\times n_k}$. Hence, the computational cost of the max rank TT-SVD algorithm is
\begin{equation*}
\begin{split}
    \sum_{k=1}^{K-1} C_{\text{svd}}(\matA^{\{k\}}) &= \sum_{k=1}^{K-1} \mathcal{O}(m_kn_k\min(m_k,n_k))\\
    &\leq \sum_{k=1}^{K-1} \mathcal{O}((2R_{\max})^2 2^{K-k})\\
    & = 4R_{\max}^2 \sum_{k=1}^{K-1} \mathcal{O}(2^{k})\\
    & = 4R_{\max}^2 \mathcal{O}(2^{K}-2)\\
    & = \mathcal{O}(R^2_{\max}2^K).
\end{split}
\end{equation*}
From \cite{DKS12}, we have that for the QTT-FFT and QTT-iFFT algorithms, the computational complexity is $\mathcal{O}(K^2R^3_{\max})$.
In Algorithm \ref{alg:full}, the computational complexity comes from the multiplication $\matA\matB$ in every loop. For the $k$th loop,
$\matA\in\C^{2^{k-1}\times r_{k-1}}$ and $\matB\in\R^{r_{k-1}\times 2r_{k}}$ for $k=2,\ldots,K$, thus the computational complexity is
proportional to the cost of multiplying $\matA$ by $\matB$, i.e.,
\begin{equation*}
\begin{split}
    C_{\text{full}} &= \sum_{k=2}^K \mathcal{O}(2^{k-1}r_{k-1}2r_{k})\\
    & \leq R^2_{\max} \sum_{k=2}^{K} \mathcal{O}(2^{k})\\
    & = R^2_{\max}\mathcal{O}(2^{K+1}-4) \\
    & = \mathcal{O}(R^2_{\max}2^K).
\end{split}
\end{equation*}
Hence, the total computational complexity is
\begin{equation*}
    \mathcal{O}(R^2_{\max}2^K)+\mathcal{O}( K^2R^3_{\max})+\mathcal{O}(R^2_{\max}2^K)=\mathcal{O}(R^2_{\max}2^K).
\end{equation*}
\end{proof}
\noindent
For the randomized SVD, we have the computational complexity
$C_{rsvd}(A^{\{k\}})=\mathcal{O}(m_kn_k(R_{\max}+p))=\mathcal{O}(2^{K-k}R_{\max}(R_{\max}+p))$. Thus, the run time for the convolution with a max
rank TT-RSVD is similar when $p$ is small. In $D$ spatial dimensions, we can obtain a similar result but by replacing $K$ with $DK$ in the max
rank TT-SVD algorithm and the full tensor algorithm, and the QTT-FFT algorithm is $\mathcal{O}(DK^2R^3_{\max})$. Hence, the total run time
complexity in $D$ spatial dimensions is $\mathcal{O}(R^2_{\max}2^{DK})$.

\section{Denoising}
\label{sec:denoising}
It is well known that the SVD can remove noise from matrix data, as seen in \cite{EK19,JY11}, but little research has been done in denoising
with tensor decompositions. In \cite{LZT19} and \cite{NYAHIS20}, the Tucker decomposition was used to help remove noise from point cloud data
and electron holograms, respectively. In \cite{GCCA20}, it was shown that the TT-decomposition might have some advantages to denoising as
opposed to the Tucker decomposition. This is because a low-rank Tucker matrix guarantees a low TT-rank for the data. However, the converse
statement is not always true.

Let $\tF$ be the low TT-rank tensor representing $\bbf$ in the QTT format. Then for some core tensors
$\tF^{(k)}\in\R^{r_{k-1}\times 2\times r_k}$ with tensor slices $\tF^{(k)}(:,i_k,:)=\bbf^{(k)}_{i_k}\in\R^{r_{k-1}\times r_k}$, $i_k=0,1$. Each
element of $\tF$ can be represented in the TT format as
\begin{equation*}
    \tF(i_1,\ldots,i_k)=\bbf^{(1)}_{i_1}\ldots \bbf^{(K)}_{i_K}, \quad i_k=0,1, \quad k=1,\ldots,K,
\end{equation*}
where each $\bbf^{(k)}_{i_k}$ is a low rank matrix. In practice, it is unlikely the data collected has a low-rank TT decomposition since almost
all real radar data has noise due to hardware limitations or other signals interfering with the data. Instead, we have the noisy data
$\bbf_\xi$ whose tensor representation is
\begin{equation*}
    \tF_\xi = \tF + \bxi,
\end{equation*}
where $\bxi$ is the realization of the random noise in the TT format. The tensor $\tF_\xi$ almost surely has full TT-rank when represented
exactly in the QTT format. Ideally, we would like to be able to find an approximate TT decomposition $\tilde{\tF}$ with TT-cores
$\tilde{\tF}^{(k)}$, $k=1,\ldots,K$, using the noisy data such that $\tilde{\tF}^{(k)} \approx \tF^{(k)}$. However, it is hard to guarantee any bound on this. We argue, though, that by using our proposed methods when given the noisy data $\tF_\xi$, we can find a TT decomposition
$\tilde{\tF}$ with low rank such that $\tilde{\tF}\approx\tF$.

Consider the first iteration of the for loop of algorithm \ref{alg:tt_svd}, with $\tA=\tA_0+\tA_\xi$ as the sum of a smooth tensor ($\tA_0$)
and a noisy tensor ($\tA_\xi$). Then, after it is reshaped, we obtain the matrix
\begin{equation*}
    \matA^{\{1\}} = \matA^{\{1\}}_0+\matA^{\{1\}}_\xi,
\end{equation*}
where $\matA^{\{1\}}_0$ is a low rank matrix and $\matA^{\{1\}}_\xi$ is added noise. Let $\matA^{\{1\}} = \matU\matSi \matV^*+\matE$ be the
truncated SVD of $\matA^{\{1\}}$ and $\matA^{\{1\}}_0 = \matU_0\matSi_0 \matV_0^*$ be the SVD of $\matA^{\{1\}}_0$. Note that
$\matU\matSi \matV^*\approx \matA_0^{\{1\}}$ does not imply that $\matU\approx \matU_0$, and thus the TT-core $\tA^{(1)}$ is not guaranteed to
be approximately equal to $\tA^{(1)}_0$, where $\tA^{(1)}_0$ is the first TT-core of $\tA_0$. However, if we let $\tA^2=\tA$ on the second
iteration of the loop in \aref{alg:tt_svd} (and similarly for $\tA_0$), we do get that the elements of the tensor contraction
$\tA^{(1)}\circ \tA^2 \approx \tA^{(1)}_0\circ\tA^2_0$. Similarly, if we can approximate the noise-free component on every iteration of the for
loop, we obtain an approximation for the tensor $\tA_0$. While we do not have a theoretical bound on this error, our experiments in
\sref{sec:ex_conv} show that this method works well at removing the noise. Since our method computes multiple SVDs, it can reduce a lot more
noise than if we just did a single SVD and can do so without excessive smearing.

\section{Numerical simulations}\label{sec:ex_conv}
This section presents some examples in one and two spatial dimensions. The original code for the  TT-decompositions and the QTT-FFT algorithms
comes from the Matlab toolbox  \textbf{oseledets/TT-Toolbox}. We have modified it accordingly for the \textbf{max rank TT-SVD},
\textbf{max rank TT-RSVD}, and \textbf{SV drop off TT-SVD} algorithm, as discussed in \sref{sec:computing}. For all our examples, we compare
the run time and errors of computing the convolution \eqref{conv} using several methods. The error for every example is the $l_2$ relative
error
\begin{equation}\label{err_conv}
    E_{2}(\bbI)=\frac{\|\bbI-\bbI_\text{ref}\|_2}{\|\bbI_\text{ref}\|_2},
\end{equation}
where in $D$ spatial dimensions
\begin{equation*}
    \|\bbI\|_2 = \sqrt{\frac{1}{N^D}\sum_{\mj=\0}^{\mN-\1} |\bbI_{\mj}|^2}.
\end{equation*}
The reference solution, $\bbI_\text{ref}$, is the discrete convolution \eqref{dconv} computed without any noise. In all of the examples,
we compare our methods against computing the convolution with the randomized TT-SVD algorithm from \cite{HSW18}, as well as computing the true
noisy convolution with FFT. In two space dimensions, we also approximate the convolution using a low matrix rank approximation to the noisy
data $\bbf_\xi$, where the truncated rank is determined by the actual matrix rank of $\bbf$.

For all of these examples, we use the normalized sinc imaging kernel that corresponds to the GAF (\ref{eq:GAF}) truncated to a sufficiently
large interval $[-L,L]$:
\begin{equation} \label{sinc_kern1}
g(x) = \frac{\sinc(\pi\frac{x}{\Delta_x})}{\int_{-L}^L \sinc(\pi\frac{x}{\Delta_x})\; dx},\quad x\in[-L,L]
\end{equation}
for $D=1$, and
\begin{equation} \label{sinc_kern2}
g(x,y) = \frac{\sinc(\pi\frac{x}{\Delta_x})\sinc(\pi\frac{y}{\Delta_y})}{\iint_{-L}^L
\sinc(\pi\frac{x}{\Delta_x})\sinc(\pi\frac{y}{\Delta_y})\; dxdy},\quad (x,y)\in[-L,L]\times[-L,L]
\end{equation}
for $D=2$, where the resolution $\Delta_x$ in  \eqref{sinc_kern1} and $\Delta_x=\Delta_y$ in \eqref{sinc_kern2} is a given parameter. The one-
dimensional kernel \eqref{sinc_kern1} for $\Delta_x= 0.04\pi$ is shown in Figure~\ref{fig:kernel1}.

In Table \ref{tab:qtt_rel_err}, we present the relative error for each example for $K=20$ when $D=1$, and $K=10$ when $D=2$. In this table,
the convolution $\bbf_\xi*\bbg$ is denoted by $\bbI_\xi$ and  computed using the FFT algorithm,
the QTT-convolution computed with the \textbf{max rank TT-SVD} algorithm is denoted by $\bbI_{QTT_0}$, the QTT-convolution computed with the
\textbf{max rank TT-RSVD} is denoted by $\bbI_{QTT_r}$, and the convolution computed using the \textbf{SV drop off TT-SVD} algorithm is denoted
by $I_\delta$. In turn, the convolutions computed using the randomized TT-decomposition are denoted by $\bbI_{RTT}$, and in two dimensions, the
convolution computed using low-rank approximations of $\bbf$ is denoted by $\bbI_{lr}$. For $I_\delta$ and $\bbI_{lr}$, we also denote what
parameter $\delta$ and  truncation matrix rank $R$ are used, respectively, for each example using a subscript of the error.
\begin{figure}[ht!]
\centering
\includegraphics[trim=0 0 0 0,clip,width=.5\linewidth]{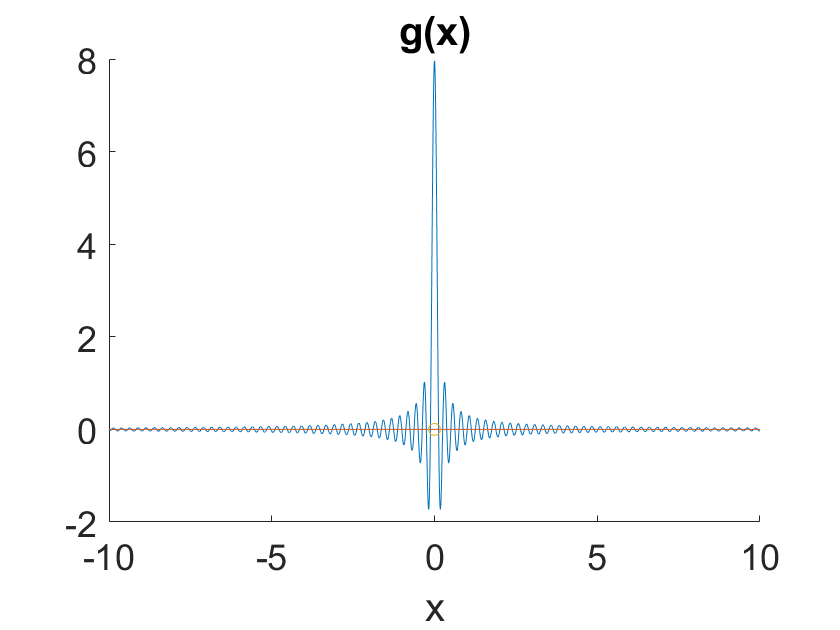}
    \caption{Kernel function \eqref{sinc_kern1} with $\Delta_x= 0.04\pi$.}
    \label{fig:kernel1}
\end{figure}

For each example, we show the TT-ranks of the original function without noise, $\bbf$, in the QTT format given by the tensor $\tF$. This QTT
approximation is computed with Algorithm \ref{alg:tt_svd} with the tolerance $\varepsilon=10^{-10}$. We compute these TT-ranks for $K=20$ when
$D=1$ and $K=10$ when $D=2$. However, it is worth noting that these TT-ranks do not change much for any data size.
Notice that the max TT-ranks we choose for our algorithms are less than the TT-ranks of $\bbf$ from \aref{alg:tt_svd}, yet still provide a
reasonable estimate.
\begin{table}[ht!]
    \centering
    \caption{$l_2$-norm relative error for $K=20$ for examples 1 and 2, and $K=10$ for example 3.}
    \begin{tabular}{|c|c|c|c|c|c|c|}
    \hline
    example   &  $\bbI_\xi$   & $\bbI_{QTT_0}$ & $\bbI_{QTT_r}$ & $\bbI_{\delta}$ & $\bbI_{RTT}$ &  $\bbI_{lr}$\\
    \hline
    \hline
     $1$ & $0.0383$ & $\textbf{0.0028}$  & $0.0102$ & $0.0280_{\delta=0.02}$ & $0.0430$ & -\\ \hline
     $2$ & $0.0131$  & $\textbf{0.0011}$ & $0.0075$ & $0.0068_{\delta=0.01}$ & $0.0201$ & -\\ \hline
     $3$ & $0.1142$  & $\textbf{0.0151}$ & $0.0447$ & $0.1650_{\delta=0.09}$ & $0.1534$ & $0.0470_{rank=23}$\\ \hline
    \end{tabular}
    \label{tab:qtt_rel_err}
\end{table}

\subsection{Example 1}\label{ssec:qtt_e1}
For this example, let
\begin{equation*}
    f(x) = e^{-(\frac{3x}{10})^2}(0.4\sin(8\pi x)-0.7\cos(6\pi x)),\quad x\in[-10,10],
\end{equation*}
and
\begin{gather*}
      (f_{\xi})_j = f(x_j)+\xi_j,\\  x_j=-10+\frac{\Delta x}{2}+j\Delta x,\quad j=0,\ldots,N-1, \quad \Delta x = \frac{20}{N}, \quad N=2^{K-1}-1,
\end{gather*}
with $\xi_j\sim\mathcal{N}(0,0.02)$. We also set the resolution $\Delta_x=4\Delta x$  in $\eqref{sinc_kern1}$, where $\Delta x$ is the size of the spatial discretization. Thus, the width of the main lobe of the sinc is $8\Delta x$ on the x-axis.

As we can see in Figure \ref{fig:conv_e1}, the FFT-QTT algorithm removed much of the noise in the data compared to the true convolution.
For $K=20$, we also tried computing the convolution using the original TT-SVD algorithm given in \aref{alg:tt_svd} with multiple values of $\varepsilon$. The smallest error, as defined in \eqref{err_conv}, occurred when $\varepsilon = 0.01$ and gave the relative error of $E_{2}(\bbI) = 0.03202$. This is close to the error of the true convolution of the noisy data and took over $100$ seconds to compute. However, as we can see in Table \ref{tab:qtt_rt_e1}, the run times for all of our methods on the same grid took less than a second. This indicates that the original TT-SVD algorithm is practically unsuitable for removing data noise.

The max TT-rank of the discretization of $f(x)$ in the QTT format, $\tF$, is $17$, yet we were able to achieve our approximation using a max rank of $R_{max}=10$ for the \textbf{max rank TT-SVD} and \textbf{max rank TT-RSVD} algorithms and $\hat{R}_{max}=15$ for the QTT-FFT algorithm. Thus, even if we do not know the exact TT-rank, we can still compute a good approximation.

Table \ref{tab:qtt_rt_e1} shows run times for different grid sizes for each method. We can see that computing the convolution with FFT is faster than our methods for these values of $K$. However, the convolution with our QTT methods gets closer to the FFT run time as $K$ increases. This is shown in the last column of Table \ref{tab:qtt_rt_e1} where we see the ratio of the \textbf{max rank TT-SVD} convolution method to the FFT convolution method is getting smaller as $K$ grows. This helps verify our theoretical result that for some constant max rank $R_{max}$ (and $\hat{R}_{max}$), the \textbf{max rank TT-SVD} convolution method is asymptotically faster than computing the convolution with FFT. The amount of data needed for our method to outperform the FFT method may be impractical for most real-world applications in 1-2 spatial dimensions.

\begin{figure}[ht!]
    \centering
    \begin{minipage}{.45\textwidth}
    \centering
    \includegraphics[trim=45 0 0 0,clip, height=0.85\textwidth,width=1.1\textwidth]{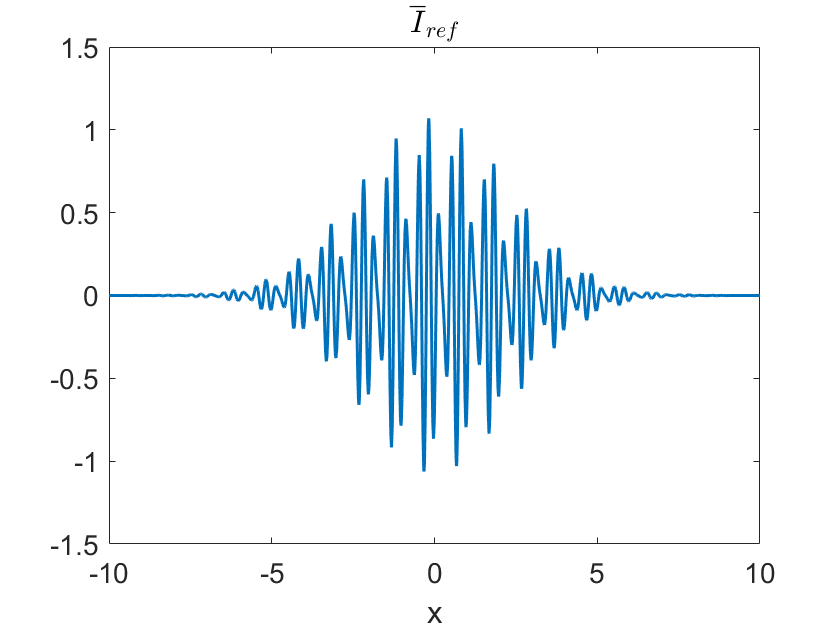}
    \end{minipage}
    \begin{minipage}{.45\textwidth}
    \centering
    \includegraphics[trim=0 0 45 0,clip, height=0.85\textwidth,width=1.1\textwidth]{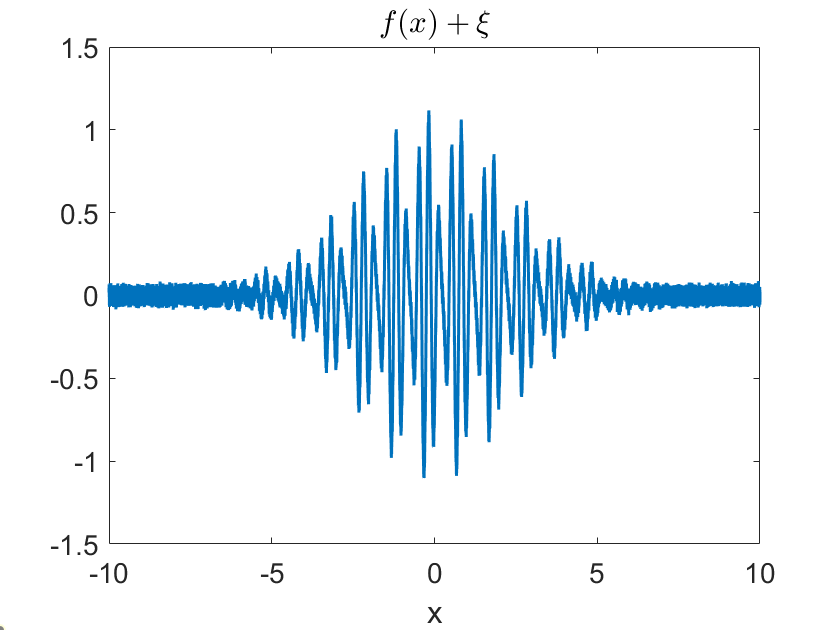}
    \end{minipage}
    \begin{minipage}{.45\textwidth}
    \centering
    \includegraphics[trim=45 0 0 0,clip, height=0.85\textwidth,width=1.1\textwidth]{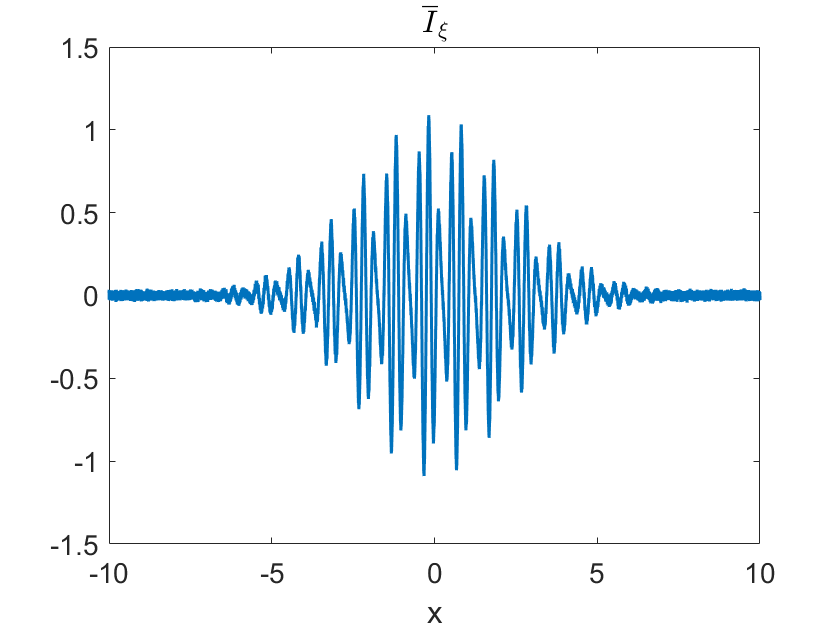}
    \end{minipage}
    \begin{minipage}{.45\textwidth}
    \centering
    \includegraphics[trim=0 0 45 0,clip, height=0.85\textwidth,width=1.1\textwidth]{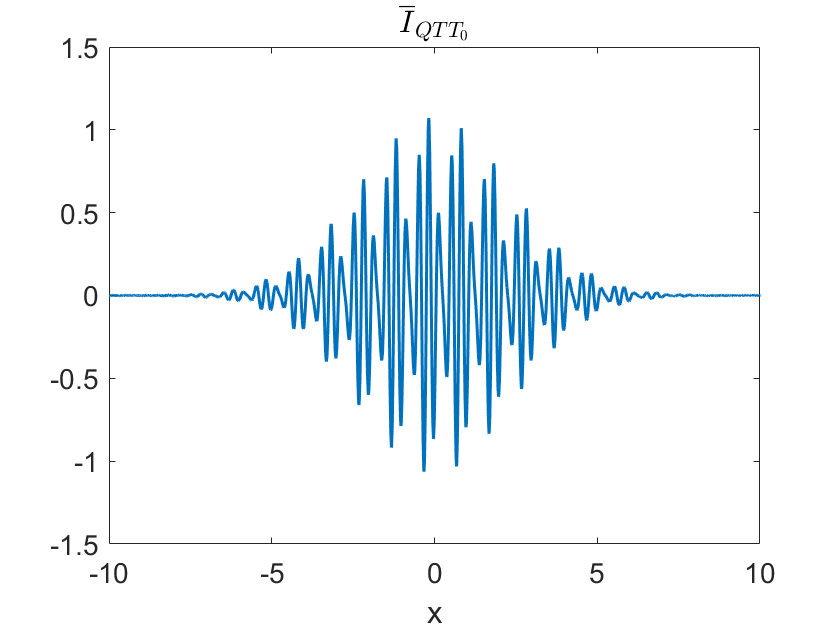}
    \end{minipage}
    \begin{minipage}{.45\textwidth}
    \centering
    \includegraphics[trim=45 0 0 0,clip, height=0.85\textwidth,width=1.1\textwidth]{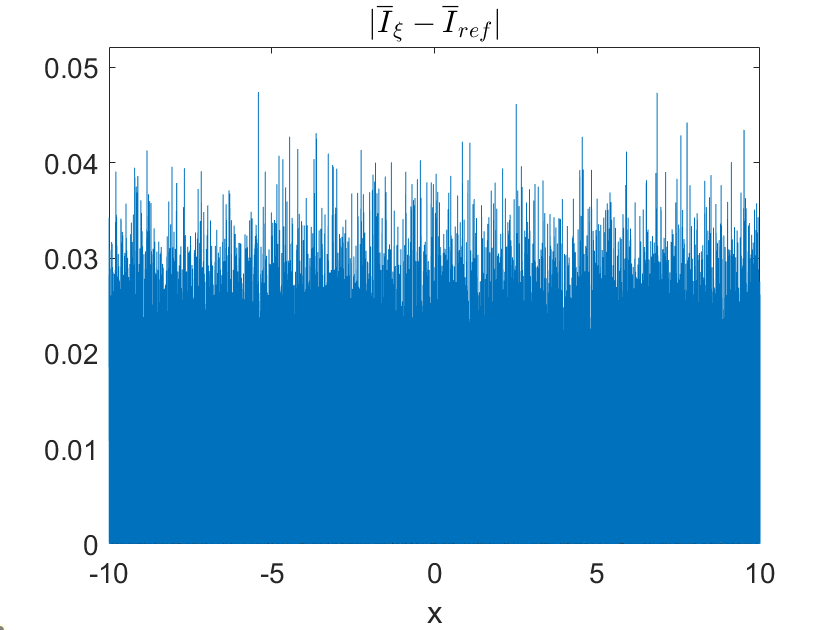}
    \end{minipage}
    \begin{minipage}{.45\textwidth}
    \centering
    \includegraphics[trim=0 0 45 0,clip, height=0.85\textwidth,width=1.1\textwidth]{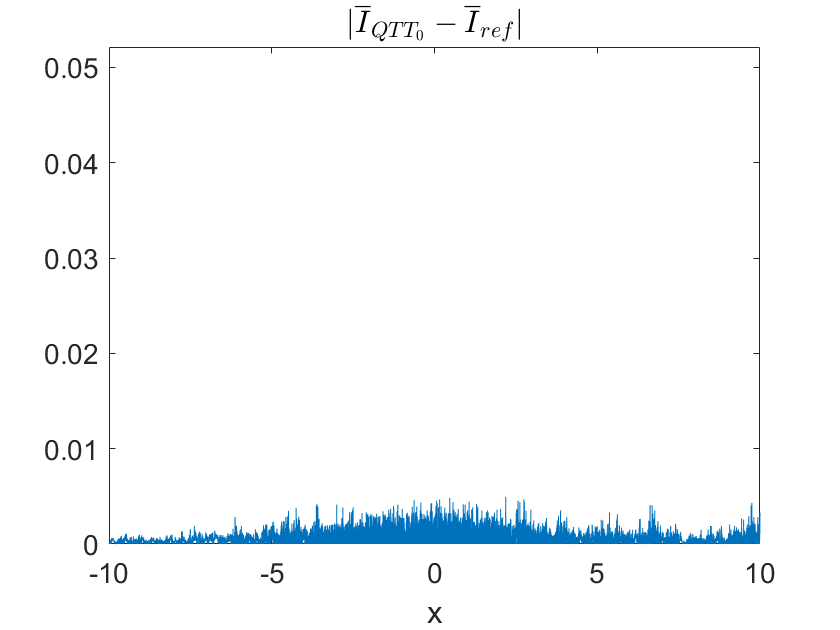}
    \end{minipage}
    \caption{\textit{Top Left:} True convolution of data without noise, $\bbI$. \textit{Top Right:} Function data with noise, $\bbf_\xi$.
    \textit{Middle Left:} True convolution of data with noise, $\bbI_\xi$. \textit{Middle Right:} Convolution using the \textbf{max rank TT-SVD} algorithm, $\bbI_{QTT_0}$.
    \textit{Bottom Left:} Absolute error of $\bbI_\xi$. \textit{Bottom Right:} Absolute error of $\bbI_{QTT_0}$.}
    \label{fig:conv_e1}
\end{figure}

\begin{table}[ht!]
    \centering
    \caption{Run time (seconds): Example 1 convolutions.}
    \begin{tabular}{|c|c|c|c|c|c|c|}
    \hline
    K  &  $\bbI_\xi$   & $\bbI_{QTT_0}$ & $\bbI_{QTT_r}$ & $\bbI_{\delta}$ & $\bbI_{RTT}$ & $\bbI_{QTT_0}/\bbI_\xi$\\
    \hline
    \hline
     $16$ & $\textbf{0.005}$ & $0.325$  & $0.369$ & $1.485_{\delta=0.02}$ & $0.414$ & 65\\ \hline
     $20$ & $\textbf{0.067}$  & $0.653$ & $0.694$ & $0.479_{\delta=0.02}$ & $0.609$ & 9.7463\\ \hline
     $24$ & $\textbf{1.21}$  & $4.41$ & $4.86$ & $3.10_{\delta=0.02}$ & $2.80$ & 3.6446\\ \hline
     $26$ & $\textbf{5.77}$  & $17.75$ & $19.68$ & $13.93_{\delta=0.02}$ & $10.97$ & 3.0763\\ \hline
     $28$ & $66.6$  & $95.5$ & $108.7$ & $79.0_{\delta=0.02}$ & $\textbf{62.49}$ & 1.4339\\ \hline
    \end{tabular}
    \label{tab:qtt_rt_e1}
\end{table}

\begin{table}[ht!]
    \centering
    \caption{Data storage for Example 1.}
    \begin{tabular}{|c|c|c|}
    \hline
    K   &  $f_\xi$   & $\tF_{\xi}$\\
    \hline
    \hline
     $16$ & $65{,}536$ & $2088$ \\ \hline
     $20$ & $1{,}048{,}576$ & $2888$ \\ \hline
     $24$ & $16{,}777{,}216$ & $3688$ \\ \hline
     $26$ & $67{,}108{,}864$ & $4088$ \\ \hline
     $28$ & $268{,}435{,}456$ & $4488$ \\ \hline
    \end{tabular}
    \label{tab:data_storage_e1}
\end{table}

Table \ref{tab:data_storage_e1} shows the number of elements to represent the data $\bbf_\xi$ fully versus how many elements are required to store the data in the QTT-format with a prescribed max rank of $R_{max}=10$, $\tF_{\xi}$, in Example 1. As we can see, storing all the elements takes a lot of data and grows exponentially in $K$, while storing the elements in the QTT format takes a lot less data and only grows linearly in $K$. These values for the QTT-data storage can be found by looking at the size of the core tensors. For the tensor $\tF_{\xi}$ in the QTT format and with a max TT-rank of $R_{max}=10$, we have the TT-cores
\begin{align*}
    & \tF^{(1)}_{\xi},\tF^{(K)}_{\xi}\in\R^{1\times2\times2},\\
    & \tF^{(2)}_{\xi},\tF^{(K-1)}_{\xi}\in\R^{2\times2\times4},\\
    & \tF^{(3)}_{\xi},\tF^{(K-2)}_{\xi}\in\R^{4\times2\times8},\\
    & \tF^{(4)}_{\xi},\tF^{(K-3)}_{\xi}\in\R^{8\times2\times10},\\
    & \tF^{(k)}_{\xi}\in\R^{10\times2\times10},\quad k=5,\ldots,K-4.
\end{align*}
Thus, the number of elements, $N_{el}$, that make up this QTT tensors is:
\begin{equation*}
    N_{el} = 2(1\times2\times2)+2(2\times2\times4)+2(4\times2\times8)+2(8\times2\times10)+(K-8)(10\times2\times10).
\end{equation*}

The \textbf{max rank TT-RSVD} algorithm is not able to produce results as good as the \textbf{max rank TT-SVD} (see Table \ref{tab:qtt_rel_err} for relative error comparison and Table \ref{tab:qtt_rt_e1} for a run time comparison) but is still able to produce a reasonably low error. While the run time for the  \textbf{max rank TT-SVD} is faster than the \textbf{max rank TT-RSVD} for all of our methods, the \textbf{max rank TT-RSVD} can be faster for tensors with larger mode sizes. This is due to the SVD in \textbf{max rank TT-SVD} algorithm with mode sizes, $M_k$, may be computed on a matrix with $m_k=M_kR_{\max}$ rows. In contrast, for the \textbf{max rank TT-RSVD} algorithm, the SVD is computed on a matrix with $m_k=R_{\max}+p$ rows when $M_k=2$ (such as for the QTT decomposition). The difference in the sizes of $m_k$ does not make up for the extra amount of work the RSVD algorithm does. Although this paper focuses on the QTT-decomposition and thus $M_k=2$, we believe this is important to note as the \textbf{max rank TT-RSVD} algorithm can speed up the TT-decomposition for higher mode tensor data and still produce accurate approximations. We verify this by computing the \textbf{max rank TT-SVD} algorithm and the \textbf{max rank TT-RSVD} algorithm on a tensor with $K=8$ modes with each mode of size $M_k=10$, $k=1,\ldots,K$. Each element of this tensor is taken from the uniform distribution $\mathcal{U}[0,1)$. The max rank TT-SVD algorithm took $9.57$ seconds, and the \textbf{max rank TT-RSVD} algorithm only took $5.12$ seconds, almost half the time of the \textbf{max rank TT-SVD} algorithm.

\subsection{Example 2}\label{ssec:qtt_e2}

If we were to choose a coarser resolution for the example of Section~\ref{ssec:qtt_e1}
(i.e.,  a wider sinc function), we could reduce the noise using the standard convolution at the cost of smoothing out the solution's peaks. Doing this gives similar results for the true convolution and with our methods (\sref{sec:computing}). In this section, we show an example where the ground reflectivity is very oscillatory. Here, the resolution $\Delta_x$ determined by the GAF must be small (i.e., the sinc function must be ``skinny'').  Otherwise, if the sinc window is close to or larger than the characteristic scale of variation of the ground reflectivity,  then the convolution can smooth out the actual oscillations instead of just the noise, losing most of the information in $f$.

We choose the ground reflectivity as
\begin{equation*}
    f(x) = e^{-(3x)^2}(0.9\sin(\frac{2x\pi}{5\Delta x})+1.4\cos(\frac{x\pi}{3\Delta x})),\quad x\in[-1,1],
\end{equation*}
and
\begin{gather*}
      (f_{\xi})_j = f(x_j)+\xi_j,\\  x_j=-1+\frac{\Delta x}{2}+j\Delta x,\quad j=0,\ldots,N-1, \quad \Delta x = \frac{2}{N}, \quad N=2^{K-1}-1,
\end{gather*}
with $\xi_j\sim\mathcal{N}(0,.01)$. We use $\Delta_x=2\Delta x$ and the max TT-rank of the discretization of the smooth function $f(x)$ in the QTT-format is $26$.

\begin{figure}[ht!]
    \centering
    \begin{minipage}{.45\textwidth}
    \centering
    \includegraphics[trim=10 0 0 0,clip, height=0.95\textwidth,width=1.1\textwidth]{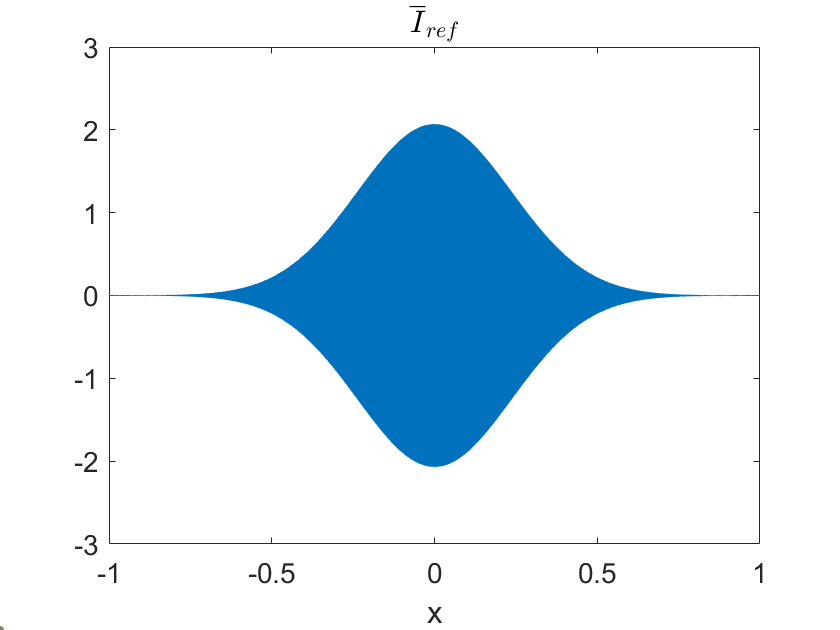}
    \end{minipage}
    \begin{minipage}{.45\textwidth}
    \centering
    \includegraphics[trim=0 0 10 0,clip, height=0.95\textwidth,width=1.1\textwidth]{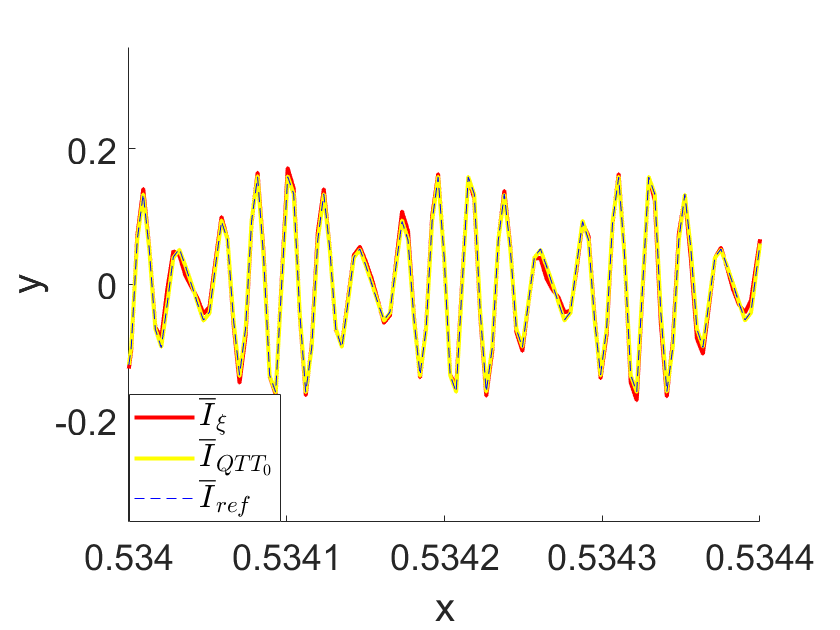}
    \end{minipage}
    \begin{minipage}{.45\textwidth}
    \centering
    \includegraphics[trim=10 0 0 0,clip, height=0.95\textwidth,width=1.1\textwidth]{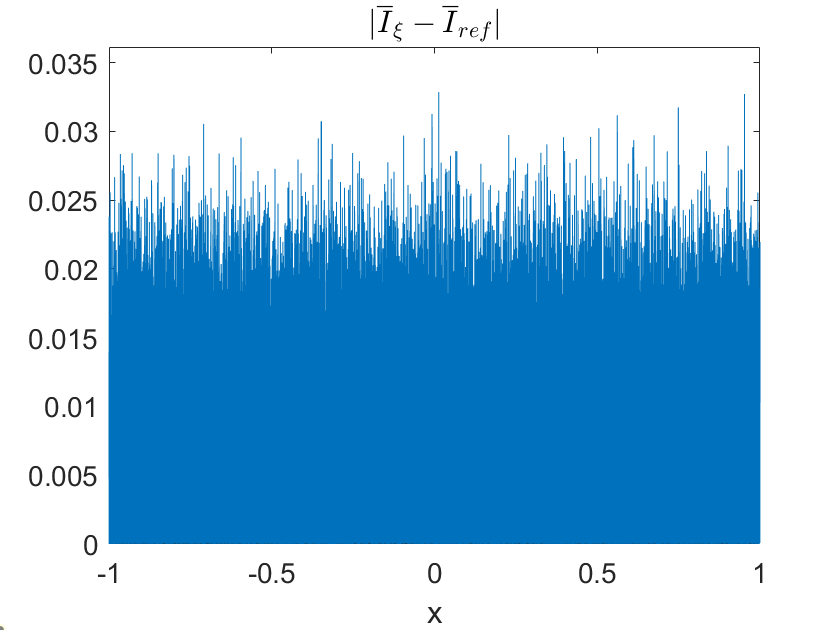}
    \end{minipage}
    \begin{minipage}{.45\textwidth}
    \centering
    \includegraphics[trim=0 0 10 0,clip, height=0.95\textwidth,width=1.1\textwidth]{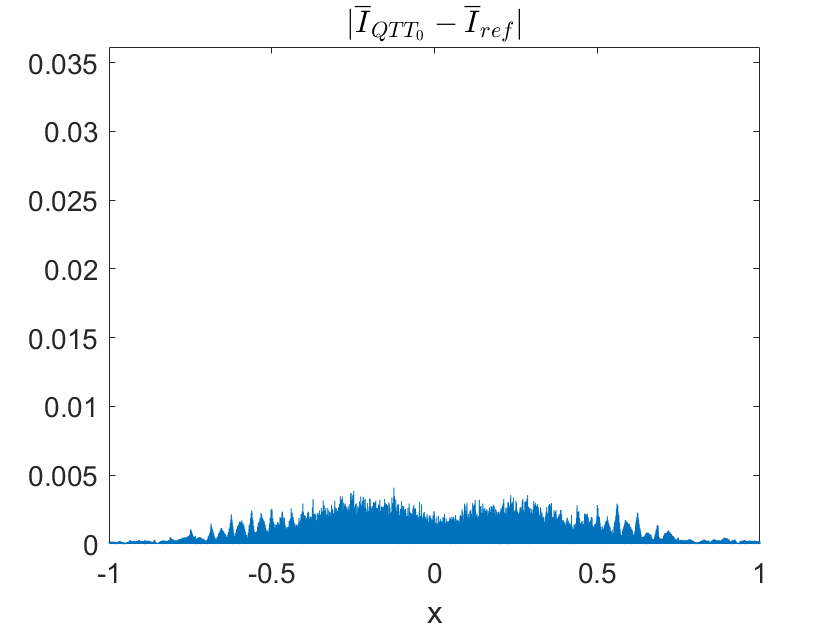}
    \end{minipage}
    \caption{\textit{Top Left:} True convolution of data without noise, $\bbI$. \textit{Top Right:} Zoomed in graph of $\bbI_{\xi}$, $\bbI_{QTT_0}$, and $\bbI_{ref}$.
    \textit{Bottom Left:} Absolute error of $\bbI_\xi$. \textit{Bottom Right:} Absolute error of $\bbI_{QTT_0}$.}
    \label{fig:conv_e2}
\end{figure}

Again, we use the max ranks of $R_{max} = 10$ and $\hat{R}_{max}=15$ for the \textbf{max rank TT-SVD} and QTT-SVD algorithms, respectively.
As the function is too oscillatory to see a lot of helpful information in the full graph (see top left of Figure \ref{fig:conv_e2}), we show a zoomed-in plot of the graph of $\bbI_\xi$, $\bbI_{QTT_0}$, and $\bbI_\text{ref}$ (see top right of Figure \ref{fig:conv_e2}). While there is some error, the QTT-FFT convolution agrees with the true convolution $\bbI_{ref}$ very well, whereas $\bbI_\xi$ has a more considerable noticeable difference. This is verified by the graphs of the absolute error given in Figure \ref{fig:conv_e2}, where the bottom left shows the error for $\bbI_{\xi}$, and the bottom right shows the error for $\bbI_{QTT_0}$.

\subsection{Example 3}\label{ssec:qtt_e3}
Let
\begin{gather*}
    f(x,y) = e^{-((2x)^2+(2y)^2)}(\sin(2\pi x)-\cos(7\pi y)+\cos(4\pi xy)-\sin(3\pi x y),\\
    (x,y)\in[-1,1]\times[-1,1]
\end{gather*}
and
\begin{gather*}
      (\mf_{\xi})_{j,k} = f(x_j,y_k)+\xi_{j,k},\\  x_j=-1+\frac{\Delta x}{2}+j\Delta x,\quad y_k=-1+\frac{\Delta y}{2}+k\Delta y, \\ j,k=0,\ldots,N-1, \quad \Delta x = \Delta y = \frac{2}{N}, \quad N=2^{K-1}-1,
\end{gather*}
with $\xi_{j,k}\sim\mathcal{N}(0,0.1)$. We have $\Delta_x=\Delta_y=2\Delta x$. Thus, the diameter of the main lobe of the sinc is $4\Delta x$ on the xy-plane. Here, we show a 2D example whose discretization of a smooth function $f$ has a matrix rank of $23$ and a TT-rank of 26 when represented in the QTT format. We still use the ranks $R_{max}=10$ and $\hat{R}_{max} = 15$ for our \textbf{max rank TT-SVD} (\textbf{max rank TT-RSVD}) and max rank QTT-FFT. Thus, our TT-ranks are much smaller than the true TT-ranks. In Figure \ref{fig:conv_e3} and \tref{tab:qtt_rel_err}, notice that our method can still capture the shape of the original function with an error that is an order of magnitude smaller than the error from the true convolution using FFT. The plots on the bottom of Figure \ref{fig:conv_e3} are a side view of the error graphs, as it is easier to compare the errors in this view.
The 2D examples are similar to the previous test case. Thus, it is reasonable to assume our method works about the same regardless of the spatial dimension.

\begin{figure}[ht!]
    \centering
    \begin{minipage}{.45\textwidth}
    \centering
    \includegraphics[trim=40 0 0 0,clip, height=0.85\textwidth,width=1.1\textwidth]{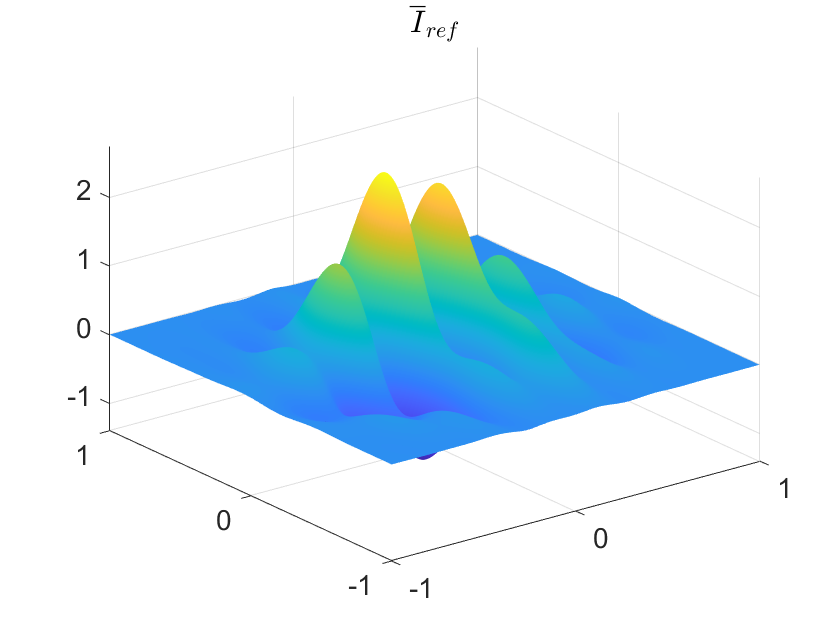}
    \end{minipage}
    \begin{minipage}{.45\textwidth}
    \centering
    \includegraphics[trim=0 0 40 0,clip, height=0.85\textwidth,width=1.1\textwidth]{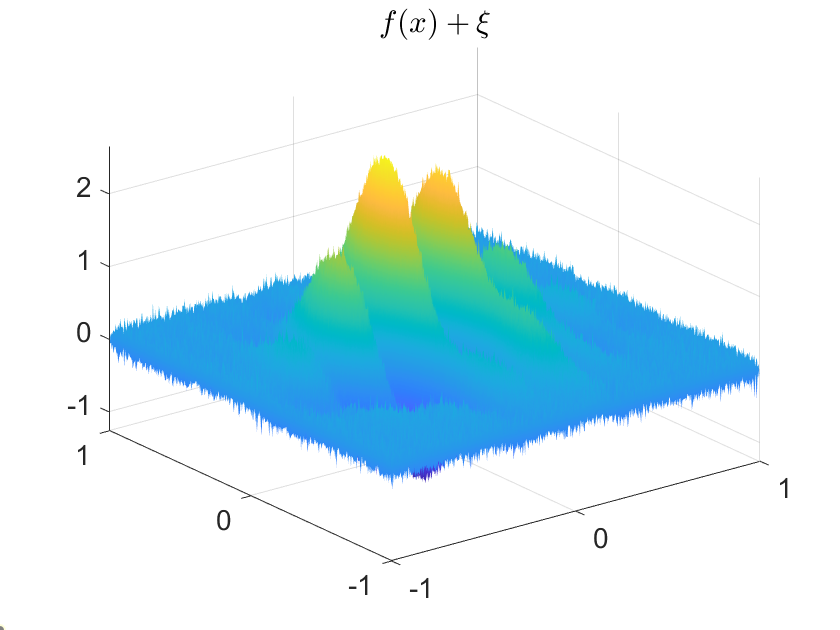}
    \end{minipage}
    \begin{minipage}{.45\textwidth}
    \centering
    \includegraphics[trim=40 0 0 0,clip, height=0.85\textwidth,width=1.1\textwidth]{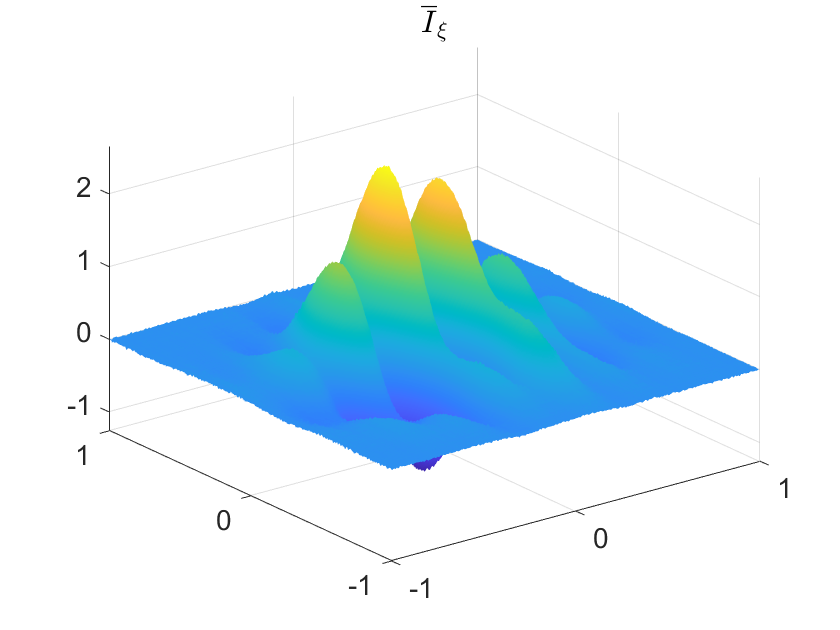}
    \end{minipage}
    \begin{minipage}{.45\textwidth}
    \centering
    \includegraphics[trim=0 0 40 0,clip, height=0.85\textwidth,width=1.1\textwidth]{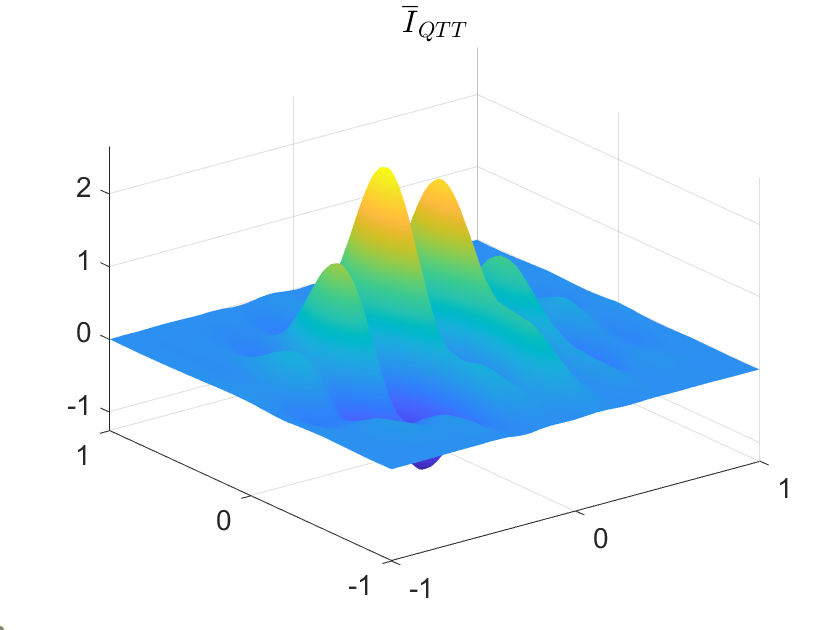}
    \end{minipage}
    \begin{minipage}{.45\textwidth}
    \centering
    \includegraphics[trim=20 0 0 0,clip, height=0.85\textwidth,width=1.1\textwidth]{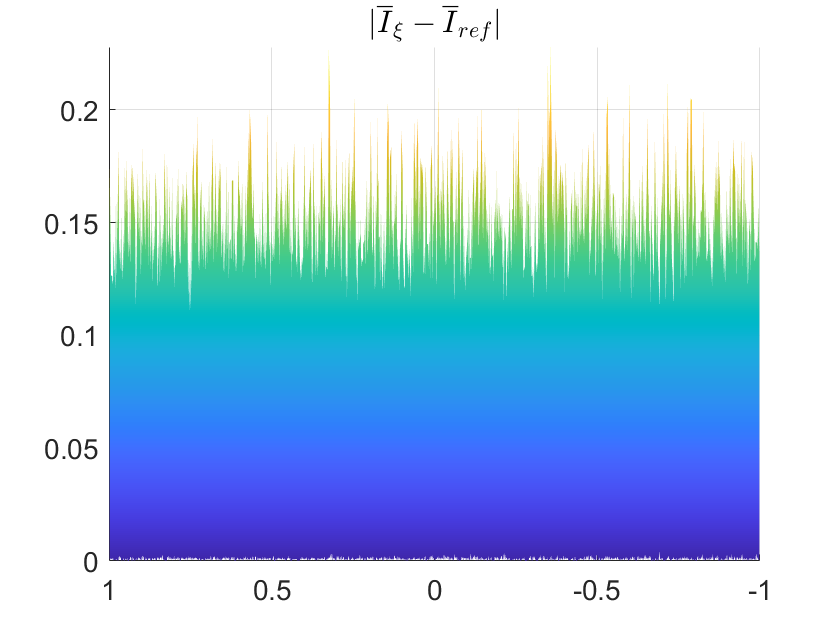}
    \end{minipage}
    \begin{minipage}{.45\textwidth}
    \centering
    \includegraphics[trim=0 0 40 0,clip, height=0.85\textwidth,width=1.1\textwidth]{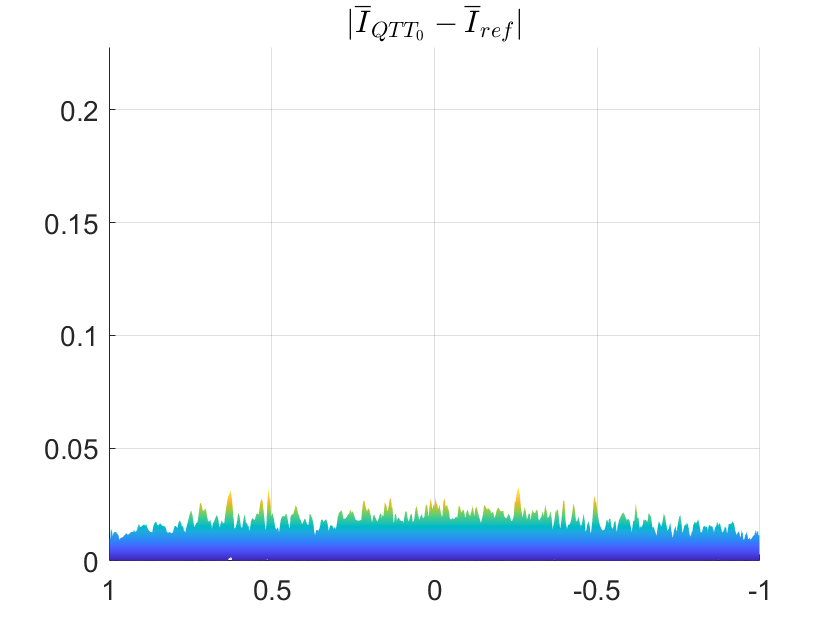}
    \end{minipage}
    \caption{\textit{Top Left:} True convolution of data without noise, $\bbI$. \textit{Top Right:} Function data with noise, $\bbf_\xi$.
    \textit{Middle Left:} True convolution of data with noise, $\bbI_\xi$. \textit{Middle Right:} Convolution using the \textbf{max rank TT-SVD} algorithm, $\bbI_{QTT_0}$.
    \textit{Bottom Left:} Absolute error of $\bbI_\xi$. \textit{Bottom Right:} Absolute error of $\bbI_{QTT_0}$.}
    \label{fig:conv_e3}
\end{figure}

In Table \ref{tab:qtt_rt_e3}, we compare the run times for the different methods of computing the convolution in two spatial dimensions. We get similar results as the one-dimensional case, where the fastest run time is from the convolution with FFT, but with the \textbf{max rank TT-SVD} and \textbf{max rank TT-RSVD} methods approaching its run time asymptotically. In 2D, when there is the same amount of data as in the 1D case (for example, $2^{14\times14}$ in 2D compared to $2^{28}$ in 1D), the 2D examples do not run as fast as the 1D example. This is due to the extra work in the 2D QTT-FFT algorithm from \cite{DKS12}.

\begin{table}[ht!]
    \centering
    \caption{Run times (seconds): Example 3 convolutions.}
    \begin{tabular}{|c|c|c|c|c|c|c|c|}
    \hline
    K   &  $\bbI_\xi$   & $\bbI_{QTT_0}$ & $\bbI_{QTT_r}$ & $\bbI_{\delta}$ & $\bbI_{RTT}$ &  $\bbI_{lr}$ & $\bbI_{QTT_0}/\bbI_\xi$\\
    \hline
    \hline
     $8$ & $\textbf{0.0034}$ & $0.2213$  & $0.310$ & $7.102_{\delta=0.04}$ & $0.297$ & $0.0090_{rank=2}$ & 65.088\\ \hline
     $10$ & $\textbf{0.0629}$  & $0.9209$ & $1.428$ & $0.670_{\delta=0.04}$ & $1.321$ & $0.154_{rank=2}$ & 14.651\\ \hline
     $12$ & $\textbf{0.948}$  & $8.6462$ & $11.258$ & $22.79_{\delta=0.04}$ & $10.27$ & $5.15_{rank=2}$ & 9.121\\ \hline
     $14$ & $\textbf{58.67}$  & $147.8$ & $151.05$ & $1087_{\delta=0.04}$ & $164.5$ & $286.2_{rank=2}$ & 2.519\\ \hline
    \end{tabular}
    \label{tab:qtt_rt_e3}
\end{table}

Again we compare the amount of data stored in the full format versus in the QTT-format. Note that the spatial dimension of the original function does not matter in how much storage it takes, just the dimensionality of the data. For example, it takes just as much data to store a vector in $\R^{2^{20}}$ as it does to store a matrix in $\R^{2^{10}\times2^{10}}$ in the QTT format with a max rank of $R_{max}$.

\begin{table}[ht!]
    \centering
    \caption{Data storage for Example 3.}
    \begin{tabular}{|c|c|c|}
    \hline
    K   &  $\bbf_\xi$   & $\tF_\xi$\\
    \hline
    \hline
     $8$ & $65{,}536$ & $2088$ \\ \hline
     $10$ & $1{,}048{,}576$ & $2888$ \\ \hline
     $12$ & $16{,}777{,}216$ & $3688$ \\ \hline
     $14$ & $268{,}435{,}456$ & $4488$ \\ \hline
    \end{tabular}
    \label{tab:data_storage_e3}
\end{table}

\section{Conclusions}
In this paper, we have shown that the QTT decomposition, along with the QTT-FFT algorithm, can effectively remove noise from signals with full TT-ranks when the true signal is of low rank. As we have seen in the numerical examples, we could drastically remove the amount of noise from the signal compared to if we took the convolution in the traditional way of using the FFT algorithm. This comes at the cost of run time, but our methods still run at a reasonable speed which got closer to the FFT run time as the dimensionality of the data increased. This is demonstrated by three different examples, two in one spatial dimension and one in two spatial dimensions. We are even able to show that our method works on very oscillatory data where it is required to have a sinc kernel with a narrow main lobe. Using approximate TT-ranks smaller than the TT-ranks of the actual signal data,
we are able to recover most of the signal. This indicates that as long as the signal is reasonably smooth, the QTT decomposition can effectively be used for noise reduction for high-dimensional data, even if the true TT-rank is unknown.

From our three new approaches, the \textbf{max rank TT-SVD} convolution algorithm works much better than the \textbf{max rank TT-RSVD} and the \textbf{SV drop off TT-SVD} convolution algorithms. As was stated before, the \textbf{max rank TT-RSVD} can outperform the \textbf{max rank TT-SVD} algorithm when there are larger mode sizes that are used in this paper. For this reason, we present this algorithm, as we have not seen it in the literature elsewhere. The \textbf{SV drop off TT-SVD} convolution algorithms do not produce as accurate of a method as the \textbf{max rank TT-SVD} or the \textbf{max rank TT-RSVD} algorithm; however, in some cases, it does run faster, and this method may give a higher degree of confidence that the truncated singular values are of little importance. Unfortunately, this method can also lead to long run times, as is seen in Table \ref{tab:qtt_rt_e3} when $K=14$.

\appendix

\section{Randomized SVD}
\label{sec:randomized}

Here, we give a brief overview of the randomized SVD (RSVD) decomposition from \cite{HMT11}. To compute the RSVD of the matrix $A\in\R^{m\times n}$, the first step is to find $Q\in\R^{m\times (k+p)}$ such that
\begin{equation*}
    \matA\approx \matQ\matQ^*\matA
\end{equation*}
where $\matQ$ has orthonormal columns and whose columns are approximations for the range of $\matA$. Here, $k$ is the number of singular values that we want in our approximation to be close to the singular values of $\matA$, and $p$ is what is known as an oversampling parameter.
To find $\matQ$, we use the following \aref{alg:Q}.

\begin{algorithm}[H]
    \SetKwInOut{Input}{input}
    \SetKwInOut{Output}{output\,}
    \SetAlgoLined
    \Input{$\matA$, $k$, $p$}
    \Output{$\matQ$}
     Draw random matrix $\matOm\in\R^{n\times(k+p)}$ such that $\matOm_{i,j}\sim \mathcal{N}(0,1)$.\\
     Let $\matY=\matA\matOm$.\\
     Compute QR factorization $\matQ\matR=\matY$.
     \caption{Solving the Fixed-Rank Problem}
     \label{alg:Q}
\end{algorithm}

Once we have obtained $\matQ$, we can compute the low-rank RSVD using \aref{alg:rsvd} (Algorithm 5.1 in \cite{HMT11}).

\begin{algorithm}[H]
    \SetKwInOut{Input}{input}
    \SetKwInOut{Output}{output\,}
    \SetAlgoLined
    \Input{$\matA$, $\matQ$, $k$}
    \Output{$\matU\matSi \matV^*$}
     \begin{enumerate}
         \item Let $\matB=\matQ^*\matA$.
         \item Compute SVD: $\Tilde{\matU}\matSi \matV^*=\matB$.
         \item Let $\matU=\matQ\Tilde{\matU}$.
     \end{enumerate}
     \caption{RSVD}
     \label{alg:rsvd}
\end{algorithm}

With these algorithms, we obtain an approximation $\tilde{\matA}$ to $\matA$ such that
\begin{equation}\label{rsvd_error}
    \|\matA-\tilde{\matA}\|\leq (1+11\sqrt{k+p}\sqrt{\min(m,n)})\sigma_{k+1},
\end{equation}
with probability $1-6p^{-p}$. If we truncate the SVD to only the leading k singular values in \aref{alg:rsvd}, then the error on the left-hand side of \eqref{rsvd_error} only increases by at most $\sigma_{k+1}$.
The computational complexity for each step of this algorithm is given as
\begin{itemize}
    \item $\mathcal{O}(mn(k+p))$
    \item $\mathcal{O}((k+p)^2n)$
    \item $\mathcal{O}((k+p)^2m)$,
\end{itemize}
Thus, for $k+p<\min(m,n)$, the overall algorithm requires $\mathcal{O}(mn(k+p))$ operations.

\subsection*{Acknowledgments} 

The work of A. Chertock was supported in part by NSF grants DMS-1818684 and DMS-2208438. The work of C. Leonard was supported in part by NSF grant DMS-1818684. The work of S. Tsynkov was supported in part by  US Air Force Office of Scientific Research (AFOSR) under grant \# FA9550-21-1-0086.

\bibliographystyle{plain}
\bibliography{ref}

\end{document}